\renewcommand{\epsilon}{\varepsilon}
\def\fps@figure{h, t}
\DeclareMathOperator*{\ext}{ext}
\newtheorem{theorem}{Theorem}
\def\pder#1#2{{\frac {\partial #1} {\partial #2}}}
\begin{document}
\title{Controlled Lagrangians and Stabilization \\ of Discrete Mechanical Systems I}
\author{Anthony M. Bloch, Melvin Leok, Jerrold E. Marsden, and Dmitry V. Zenkov}
\maketitle

\begin{abstract}
\noindent
Controlled Lagrangian and matching techniques are developed for the stabilization of relative equilibria and equilibria of discrete mechanical systems with symmetry as well as broken symmetry.   Interesting new phenomena arise in the controlled Lagrangian approach in the discrete context that are not present in the continuous theory. In particular, to make the discrete theory effective, one can make an appropriate selection of momentum levels or, alternatively, introduce a new parameter into the controlled Lagrangian to complete the kinetic matching procedure. Specifically, new terms in the controlled shape equation that are necessary for potential matching in the discrete setting are introduced. The theory is illustrated with the problem of stabilization of the cart-pendulum system on an incline. The paper also discusses digital and model predictive controlers.
\end{abstract}

\section{Introduction}
The method of controlled Lagrangians for stabilization of relative equilibria (steady state motions) originated in Bloch, Leonard, and Marsden \cite{BLM1} and was then developed in Auckly~\cite{A}, Bloch, Leonard, and Marsden \cite{BLM2, BLM4, BLM3}, Bloch, Chang, Leonard, and Marsden \cite{BCLM}, and Hamberg~\cite{H1, H2}. A similar approach for Hamiltonian controlled systems was introduced and further studied in the work of Blankenstein, Ortega, van der Schaft, Maschke, Spong, and their collaborators (see, e.g., \cite{MaOrSc2000, OrSpGoBl2002} and related references). The two methods were shown to be equivalent in \cite{ChBlLeMa2002} and a nonholonomic version was developed in \cite{ZBM3,ZBM2002}, and \cite{Bl2003}.

In the controlled Lagrangian approach, one considers a mechanical system with an uncontrolled (free) Lagrangian equal to kinetic energy minus potential energy. To start with, one considers the case in which the Lagrangian is invariant with respect to the action of a Lie group $G$ on the configuration space. To stabilize a relative equilibrium of interest, the kinetic energy is modified to produce a \emph{controlled Lagrangian} which describes the dynamics of the controlled closed-loop system. The equations corresponding to this controlled Lagrangian are the closed-loop equations and the new terms appearing in those equations corresponding to the directly controlled variables correspond to control inputs. The modifications to the Lagrangian are chosen so that no
new terms appear in the equations corresponding to the variables that are not directly controlled. This process of obtaining controlled Euler--Lagrange equations by modifying the original Lagrangian is referred to as \emph{kinetic matching}.

One advantage of this approach is that once the form of the control law is derived using the controlled Lagrangian, the stability of a relative equilibrium of the closed-loop system can be determined by energy methods, using any available freedom in the choice of the parameters of the controlled Lagrangian. To obtain asymptotic stabilization, dissipation-emulating terms are added to the control input.

The method is extended in \cite{BCLM} to the class of Lagrangian mechanical systems with potential energy that may break symmetry, \emph{i.e.}, there is still a symmetry group $G$ for the kinetic energy of the system but one may now have a potential energy that need not be $G$-invariant. Further, in order to define the controlled Lagrangian, a modification to the potential energy is introduced that also breaks symmetry in the group variables. After adding the dissipation-emulating terms to the control input, this procedure allows one to achieve complete state-space asymptotic stabilization of an equilibrium of interest.

The main objective of this paper is to develop the method of controlled Lagrangians for discrete mechanical systems.  The discretization is done in the spirit of discrete variational mechanics, as in \cite{MaWe2001}. In particular, as the closed loop dynamics of a controlled Lagrangian system is itself Lagrangian, it is natural to adopt a variational discretization that exhibits good long-time numerical stability. This study is also motivated by the recent development of structure-preserving algorithms for the numerical simulation of discrete controlled systems, such as recent work on discrete optimization, such as in \cite{GuBl2005, JuMaOb2005,JuMaOb2006}. 
    
The matching procedure is carried out explicitly for discrete systems with one shape and one group degree of freedom to avoid technical issues and to concentrate on the new phenomena that emerge in the discrete setting that have not been observed in the continuous-time theory. In particular, it leads one to either carefully select the momentum levels or introduce a new term in the controlled Lagrangian to perform the discrete kinetic matching. Further, when the potential shaping is carried out, it is necessary to introduce non-conservative forcing in the shape equation associated with the controlled Lagrangian.

It is also shown that once energetically stabilized, the (relative) equilibria  of interest  can be asymptotically stabilized by adding dissipation emulating terms. The separation of controlled dissipation from physical dissipation remains an interesting topic for future research; even in the continuous theory there are interesting questions remaining, as discussed in \cite{WoReBlChLeMa2004}.

The theoretical analysis is validated by simulating the discrete cart-pendulum system on an incline. When dissipation is added, the inverted pendulum configuration is seen to be asymptotically stabilized, as predicted.

The discrete controlled dynamics is used to construct a real-time model predictive controller with piecewise constant control inputs. This serves to illustrate how discrete mechanics can be naturally applied to yield digital controllers for mechanical systems.

The paper is organized as follows: In Sections \ref{discrete_mech.sec} and \ref{matching.sec} we review discrete mechanics and the method of controlled Lagrangians for stabilization of equilibria of mechanical systems. The discrete version of the potential shaping procedure and related stability analysis are discussed in Section~\ref{discrete_matching.sec}. The theory is illustrated with the discrete cart-pendulum system in Section \ref{disc_cart_pendulum.sec}. Simulations and the construction of the digital controller are presented in Sections \ref{simulations.sec} and \ref{digital.sec}.

In a future publication we intend to treat discrete systems with nonabelian symmetries as well as systems with nonholonomic constraints. 

\section{An Overview of Discrete Mechanics}
\label{discrete_mech.sec}
A discrete analogue of Lagrangian mechanics can be obtained by
considering a discretization of Hamilton's principle; this approach
underlies the construction of variational integrators. See Marsden and
West~\cite{MaWe2001}, and references therein, for a more detailed
discussion of discrete mechanics.

Consider a Lagrangian mechanical system with configuration manifold $Q$ and Lagrangian $L : TQ \rightarrow \mathbb{R}$. A key notion is that of a {\em discrete Lagrangian}, which is a map $L^d: Q \times Q \rightarrow \mathbb{R}$ that approximates the action integral along an exact solution of the Euler--Lagrange equations joining the configurations $q_k, q_{k+1} \in Q$,
\begin{equation}
\label{exact_ld}
L^d(q_k,q_{k+1})\approx \ext_{q\in\mathcal{C}([0,h],Q)} \int_0^h
L(q,\dot q)\,dt,
\end{equation}
where $\mathcal{C}([0,h],Q)$ is the space of curves
$q:[0,h]\rightarrow Q$ with $q(0)=q_k$, $q(h)=q_{k+1}$, and $\ext$
denotes extremum. 

In the discrete setting, the action integral of
Lagrangian mechanics is replaced by an action sum
\begin{equation*}
       S ^d ( q _0, q _1, \dots, q _N) = \sum_{k=0}^{N-1}L^d (q_k, q_{k+1}),
\end{equation*}
where $q_k\in Q$, $ k = 0, 1, \dots, N $, 
is a finite sequence of points in the configuration space. 
The equations are obtained by the discrete Hamilton 
principle, which extremizes the discrete action given
fixed endpoints $q_0$ and $q_N$. Taking the extremum over $q_1,\dots
,q_{N-1}$ gives the {\em discrete Euler--Lagrange equations}
\begin{equation*}
         \label{DEL}
         D_1 L^d (q_k, q_{k+1}) + D_2 L^d (q_{k-1}, q_k) = 0,
\end{equation*}
for $ k = 1,\dots ,N-1$. This implicitly defines the update map
$\Phi:Q\times Q\rightarrow Q\times Q$, where
$\Phi(q_{k-1},q_k)=(q_k,q_{k+1})$ and $ Q \times Q $ replaces the phase space $ TQ $ of Lagrangian mechanics. 

Since we are concerned with control, we need to consider the effect of
external forces on Lagrangian systems. In the context of discrete
mechanics, this is addressed by introducing the {\em discrete
  Lagrange--d'Alembert principle} (see Kane, Marsden, Ortiz, and 
West~\cite{KaMaOrWe2000}), which states that
\[
\delta\sum_{k=0}^{n-1}L^{d}\left(  q_{k},q_{k+1}\right)  +\sum_{k=0}%
^{n-1}F^{d}\left( q_{k},q_{k+1}\right) \cdot\left( \delta q_{k},\delta
     q_{k+1}\right) =0
\]
for all variations $\bm{\delta q}$ of $\bm{q}$ that vanish at
the endpoints. Here, $\bm{q}$ denotes the vector of positions
$(q_0,q_1,\ldots,q_N)$, and $\bm{\delta q}
=(\delta q_0, \delta
q_1,\ldots, \delta q_N)$, where 
$\delta q_k\in T_{q_k} Q$.
The discrete one-form $F^{d}$ on $Q\times Q$ approximates the impulse
integral between the points $q_k$ and $q_{k+1}$, just as the discrete
Lagrangian $L^d$ approximates the action integral. We define 
the maps
$F^{d}_{1},F^{d}_{2}:Q\times Q\rightarrow T^{\ast}Q$ by the
relations%
\begin{align*}
F^{d}_{2}\left(  q_{0},q_{1}\right) 
\delta q_{1}
& := F^{d}\left(  q_{0},q_{1}\right) \cdot \left(  0,\delta
q_{1}\right),
\\
F^{d}_{1}\left(  q_{0},q_{1}\right) 
\delta q_{0}
& := F^{d}\left(  q_{0},q_{1}\right) \cdot \left(  \delta q_{0},0\right)  .
\end{align*}
The discrete Lagrange--d'Alembert principle may then be rewritten as%
\begin{multline*}
\delta\sum_{k=0}^{n-1}L^{d}\left(  q_{k},q_{k+1}\right) 
\\
+\sum_{k=0}%
^{n-1}\left[  F^{d}_{1}\left(  q_{k},q_{k+1}\right)  
\delta q_{k}%
+ F^{d}_{2}\left(  q_{k},q_{k+1}\right)  
\delta q_{k+1}\right]
= 0
\end{multline*}
for all variations $\bm{\delta q}$ of $\bm{q}$ that vanish at the endpoints. This is equivalent to the {\em forced discrete Euler--Lagrange equations}
\begin{align*}
D_{1}L^{d}\left( q_{k}
,q_{k+1}\right) &+ D_{2}L^{d}\left(  q_{k-1},q_{k}\right) 
\\
&+ F^{d}_{1}\left(  q_{k},q_{k+1}\right)  + F^{d}_{2}\left(
q_{k-1},q_{k}\right)  =0.
\end{align*}

\section{Matching and Controlled Lagrangians}\label{matching.sec}
\subsection{Controlled Euler--Lagrange Equations}
This paper focuses on systems with one shape and one group degree of freedom. It is further assumed that the configuration space $Q$ is the direct product of a one-dimensional shape space $S$ and a one-dimensional Lie group $G$. 

The configuration variables are written as $ q = (\phi, s) $, with $ \phi \in S $, and $ s \in G $. The velocity phase space, $TQ$, has coordinates $(\phi, s, \dot{\phi}, \dot{s})$. 
The Lagrangian is the kinetic minus potential energy
\begin{equation}\label{continuous_11_lagr.eqn}
L(q, \dot{q}) = \tfrac 12 \big[
	\alpha \dot \phi ^2 + 2 \beta (\phi) \dot \phi \dot s 
	+ \gamma \dot s ^2 
\big] - V (q),
\end{equation}
with $G$-invariant kinetic energy. 
The corresponding controlled Euler--Lagrange dynamics is 
\begin{align}\label{EL_L_1.eqn} 
\frac{d}{dt} \frac{\partial L}{\partial \dot \phi} 
- \frac{\partial L}{\partial \phi} &= 0,
\\
\label{EL_L_2.eqn}
\frac{d}{dt} \frac{\partial L}{\partial \dot{s}} &= u,
\end{align} 
where $u$ is the control input. 
\subsection{Continuous-Time Kinetic Shaping}
Assume that the potential energy is $G$-invariant, \emph{i.e.,} $ V (q) = V (\phi) $, and that the \emph{relative equilibria} $ \phi = \phi _e $, $ \dot{s} = \text{const} $ are unstable and given by non-degenerate critical points of $ V (\phi) $. To stabilize the relative equilibria $ \phi = \phi _e $, $ \dot{s} = \text{const} $ with respect to $\phi$, kinetic shaping is used. The controlled Lagrangian in this case is defined by
\begin{equation}\label{cont_controlled_lagrangian_11.eqn}
L _{\tau,\sigma}(q, \dot{q}) = 
L( \phi, \dot \phi, \dot{s} + \tau (\phi) \dot \phi) +
{\textstyle \frac12} \sigma \gamma (\tau (\phi) \dot \phi ) ^2, 
\end{equation} 
where $ \tau (\phi) = \kappa \beta (\phi) $. This velocity shift corresponds to a new choice of the horizontal space (see \cite{BLM3} for details). The dynamics is just the Euler--Lagrange dynamics for controlled Lagrangian \eqref{cont_controlled_lagrangian_11.eqn},
\begin{align}\label{EL_CL_1.eqn}
\frac{d}{dt} \frac{\partial L _{\tau, \sigma}}{\partial \dot \phi} 
- \frac{\partial L_{\tau, \sigma}}{\partial \phi} &= 0,
\\
\label{EL_CL_2.eqn}
\frac{d}{dt} \frac{\partial L_{\tau, \sigma}}{\partial \dot{s}} &= 0.
\end{align}
Lagrangian \eqref{cont_controlled_lagrangian_11.eqn} satisfies the simplified matching conditions of \cite{BCLM} when the kinetic energy metric coefficient $\gamma$ in \eqref{continuous_11_lagr.eqn} is constant. 

Setting 
\(
u = - d \big( \gamma \tau (\phi) \dot{\phi} \big)/ dt 
\)
defines the control input, makes equations \eqref{EL_L_2.eqn} and \eqref{EL_CL_2.eqn} identical, and results in controlled momentum conservation by dynamics \eqref{EL_L_1.eqn} and \eqref{EL_L_2.eqn}. Setting $ \sigma = - 1/\gamma \kappa $ makes equations \eqref{EL_L_1.eqn} and \eqref{EL_CL_1.eqn} reduced on the controlled momentum level identical. 

A very interesting feature of systems \eqref{EL_L_1.eqn}, \eqref{EL_L_2.eqn} and \eqref{EL_CL_1.eqn}, \eqref{EL_CL_2.eqn} is that the \emph{reduced} dynamics are the same on all momentum levels, which follows from the independence of equations \eqref{EL_L_1.eqn} and \eqref{EL_CL_1.eqn} of the group velocity $ \dot{s} $. We will see in Section \ref{discrete_matching.sec} that this property does not hold in the discrete setting, and one has to carefully select the momentum levels when performing discrete kinetic shaping.

\subsection{Continuous-Time Potential Shaping}
Now, consider the case when the kinetic energy is group invariant, but the potential is not. Consider the special case when the potential energy is $ V (q) = V _1 (\phi) + V _2 (s) $ with $ V _1 (\phi) $ having a local non-degenerate maximum at $ \phi _e $, and the goal is to stabilize the \emph{equilibrium} $ \phi = \phi _e $, $ s = s _e $. As it becomes necessary to shape the potential energy as well, the controlled Lagrangian is defined by the formula
\multlinegap=0em
\begin{multline}\label{cont_potential_controlled_lagrangian_11.eqn}
L _{\tau,\sigma,\rho,\epsilon } 
( \phi, s, \dot \phi, \dot{s} ) = 
L( \phi, s, \dot \phi, \dot{s} + \tau (\phi) \dot \phi) 
+
{\textstyle \frac12} \sigma \gamma 
(\tau (\phi) \dot \phi ) ^2 
\\
+ {\textstyle \frac12}(\rho - 1)\gamma (\dot{s} + (\sigma - 1) \tau (\phi) \dot \phi ) ^2 
+ V _2 (s) - V _\epsilon (y) ,\!\!
\end{multline}
where 
\begin{equation} \label{ydef}
y = s - \int _{\phi _e } ^\phi \frac{1}{\gamma}\left(
\frac{1}{\sigma} - \frac{\rho -1}{\rho} 
\right) \beta (z)\, d z,
\end{equation}
$ V _\epsilon (y) $ is an arbitrary negative-definite function, and $ (\phi _e, s _e) $ is the equilibrium of interest. 

Below we assume that $ \phi _e = 0 $, which can be always accomplished by an appropriate choice of local coordinates for each (relative) equilibrium. 

\section{Discrete Shaping}
\label{discrete_matching.sec}

\subsection{Discrete Controlled Dynamics}
\label{disc_controlled_dynamics.sec} 

In discretizing the method of controlled Lagrangians, we 
combine formulae \eqref{exact_ld}, \eqref{continuous_11_lagr.eqn}, and \eqref{cont_controlled_lagrangian_11.eqn}.
In the rest of this paper, we will adopt the notations
\begin{equation*}\label{notations.eqn}
q _{k+1/2} = \frac {q _k + q _{k+1}}{2}, \quad 
\Delta q _k = q _{k+1} - q _k, \quad q _k = (\phi _k, s _k).
\end{equation*} 
This allows us to construct a \emph{second-order accurate} 
discrete Lagrangian 
\[
L ^d (q _k,q  _{k+1}) = 
h L( 
q _{k+1/2} , \Delta q _k /h 
).
\]
Thus, for a system with one shape and one group degree of freedom the discrete Lagrangian is given by the formula
\begin{multline}
\label{discrete_second_order_lagrangian}
L ^d (q _k,q  _{k+1}) 
= \frac {h}{2} \bigg[
	\alpha \Big( \frac {\Delta \phi _k}{h} \Big) ^2 
	+
	2 \beta ( \phi _{k+1/2} ) \frac{\Delta \phi _k}{h} 
	\frac{\Delta s _k}{h}  
\\
	+
	\gamma \Big( \frac{\Delta s _k}{h} \Big) ^2 
	\bigg] - h V (q _{k+1/2}).\!\!
\end{multline}
The discrete dynamics is governed by the equations
\begin{align} 
\label{dcp1.eqn}
\pder{L ^d (q _k,q  _{k+1})}{\phi _k} + 
\pder{L ^d (q_{k-1},q  _k)}{\phi _k} &= 0,
\\
\label{dcp2.eqn}
\pder{L ^d (q _k,q  _{k+1})}{s _k} + 
\pder{L ^d (q _{k-1},q  _k)}{s _k} &= -u _k ,
\end{align}
where $ u _k $ is the control input. 

\subsection{Kinetic Shaping}\label{kinetic_shaping.sec} 
At first, it will be  assumed that the potential energy is $G$-invariant, \emph{i.e.,} $ V (q) = V (\phi) $, and that relative equilibria $ \phi _k = 0, \Delta s _k = \text{const} $ of \eqref {dcp1.eqn} and \eqref {dcp2.eqn} in the absence of control input are unstable. We will see that one needs to either appropriately select the momentum levels or introduce a new parameter into the controlled Lagrangian to complete the matching procedure.

Motivated by the continuous-time matching procedure (see Section \ref{matching.sec}), we define the discrete controlled Lagrangian by the formula
\begin{multline} 
\label{disc_controlled_lagrangian_11.eqn}
L ^d _{\tau,\sigma} (q _k, q _{k+1}) = h L _{\tau,\sigma}(q _{k+1/2}, \Delta q _k /h)
\\
=
h \Big[
L \big( \phi _{k+\frac12} , \Delta \phi _k /h  , \Delta s _k /h 
+ \kappa \beta ( \phi _{k+\frac12} ) \Delta \phi _k /h \big) 
\\
+ \frac{\sigma \gamma}{2} \Big( 
\kappa \beta ( \phi _{k+\frac12} ) \Delta \phi _k /h \Big) ^2  
\Big].\!\!\!
\end{multline}
where $ L _{\tau,\sigma}(q, \dot{q}) $ is the continuous-time controlled Lagrangian. 
The dynamics associated with \eqref{disc_controlled_lagrangian_11.eqn} is
\begin{align} 
\label{controlled_cart_pendulum_1.eqn}
\pder{L ^d  _{ \tau , \sigma } (q _k,q  _{k+1})}{\phi _k} + 
\pder{L ^d  _{ \tau , \sigma } (q_{k-1},q  _k)}{\phi _k} &= 0,
\\
\label{controlled_cart_pendulum_2.eqn}
\pder{L ^d  _{ \tau , \sigma } (q _k,q  _{k+1})}{s _k} + 
\pder{L ^d  _{ \tau , \sigma } (q _{k-1},q  _k)}{s _k} &= 0. 
\end{align}
Equation \eqref{controlled_cart_pendulum_2.eqn} is equivalent to the
\emph{discrete controlled momentum conservation:}
\begin{equation}\label{controlled_momentum_level.eqn}
p _k = \mu ,
\end{equation} 
where 
\begin{align}\label{discrete_cp_momentum.eqn}
\nonumber
p _k &= - \frac{\partial }{\partial s _k} L ^d _{\tau,\sigma} (q _k, q _{k+1})
\\
&=  \frac{
	(1 + \gamma \kappa) \beta (\phi _{k+1/2} ) 
	\Delta \phi _k + \gamma \Delta s _k} {h}.
\end{align} 

Setting
\begin{equation}\label{discrete_u.eqn}
u _k = - \frac{\gamma \Delta \phi _k \tau ( \phi _{k+1/2} ) 
- \gamma \Delta \phi _{k-1} \tau ( \phi _{k-1/2})}{h }
\end{equation} 
makes equations \eqref{dcp2.eqn} and \eqref{controlled_cart_pendulum_2.eqn}
identical and allows 
one to represent the discrete momentum equation
\eqref{dcp2.eqn} as the discrete momentum conservation law 
\begin{equation}\label{momentum_level.eqn}
p _k = p.
\end{equation} 

\begin{theorem} \label{discrete_matching.thm}
\emph{
The dynamics determined by equations \eqref{dcp1.eqn} and \eqref{dcp2.eqn} restricted to the momentum level $ p _k = p $ is equivalent to the dynamics of equations
\eqref{controlled_cart_pendulum_1.eqn} and
\eqref{controlled_cart_pendulum_2.eqn} restricted to the momentum level $
p _k = \mu $ if and only if the matching conditions 
\begin{equation}\label{kinetic_matching.eqn}
\sigma 
= - \frac{1}{\gamma \kappa}, \qquad 
\mu = \frac{p}{1 + \gamma \kappa}.
\end{equation} hold.
}
\end{theorem}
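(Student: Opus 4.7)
The plan is to derive the two reduced shape equations by eliminating $\Delta s$ via the appropriate momentum conservation law and then to compare them coefficient by coefficient. From the discussion preceding the theorem, under the feedback \eqref{discrete_u.eqn} equation \eqref{dcp2.eqn} is equivalent to the conservation law $p_k = p$ with $p_k$ as in \eqref{discrete_cp_momentum.eqn}, while \eqref{controlled_cart_pendulum_2.eqn} is equivalent to $p_k = \mu$. Each is solved algebraically for $\Delta s_k$ (and similarly $\Delta s_{k-1}$) in terms of $\phi_{k-1}, \phi_k, \phi_{k+1}$ and the respective momentum constant; substitution into \eqref{dcp1.eqn} and \eqref{controlled_cart_pendulum_1.eqn} produces two second-order difference equations in $\phi$ alone, parametrised by $p$ and by $(\mu,\sigma)$ respectively.

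To streamline the comparison I would first note that the continuous controlled Lagrangian \eqref{cont_controlled_lagrangian_11.eqn} can be rewritten as
\[
L_{\tau,\sigma} = \tfrac{1}{2}\tilde{\alpha}(\phi)\dot\phi^2 + \tilde{\beta}(\phi)\dot\phi\dot s + \tfrac{\gamma}{2}\dot s^2 - V,
\]
with $\tilde{\alpha}(\phi) = \alpha + 2\kappa\beta^2 + (1+\sigma)\gamma\kappa^2\beta^2$ and $\tilde{\beta}(\phi) = (1+\gamma\kappa)\beta$. Hence $L^d_{\tau,\sigma}$ has the same algebraic structure as $L^d$, merely with the replacements $\alpha \mapsto \tilde{\alpha}$ and $\beta \mapsto \tilde{\beta}$ (with $\tilde{\alpha}$ now $\phi$-dependent), so a single generic calculation of the discrete shape equation applies to both cases. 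After elimination of $\Delta s$, both reduced equations share the template whose principal piece is $[A(\phi_{k-1/2})\Delta\phi_{k-1} - A(\phi_{k+1/2})\Delta\phi_k]/h$ accompanied by quadratic terms in $A'$ and momentum-linear terms proportional to $\beta'$ and $\beta(\phi_{k-1/2}) - \beta(\phi_{k+1/2})$; here $A(\phi) = \alpha - (1+\gamma\kappa)\beta^2/\gamma$ in the original (using the substitution $\gamma\Delta s = hp - \tilde\beta\Delta\phi$) and $A(\phi) = \tilde{\alpha} - \tilde{\beta}^2/\gamma = \alpha + \beta^2(\sigma\gamma\kappa^2 - 1/\gamma)$ in the controlled, while the momentum factor multiplying the $\beta$-type terms is $p/\gamma$ in the original and $\mu(1+\gamma\kappa)/\gamma$ in the controlled.

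Matching the two reduced equations term by term: equality of the $A$-coefficients for every $\phi$ forces $\sigma\gamma\kappa^2 = -\kappa$, i.e., $\sigma = -1/(\gamma\kappa)$; equality of the momentum-dependent coefficients forces $\mu(1+\gamma\kappa) = p$, i.e., $\mu = p/(1+\gamma\kappa)$. Conversely, imposing these two relations makes every corresponding coefficient agree and the reduced difference equations coincide identically, proving the ``if'' direction.

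The main obstacle is the bookkeeping in the elimination step: the derivatives $\partial L^d/\partial\phi_k$ and $\partial L^d_{\tau,\sigma}/\partial\phi_k$ each produce several groups of terms (from $\alpha$, $\beta$, $\gamma$, and $V$), and the momentum substitution generates further cross-terms involving $\beta^2$ and $\beta\beta'$ that must be correctly reassembled into $A$ and $A'$. The template reformulation described above is what keeps this computation manageable and lets the matching conditions emerge by inspection.
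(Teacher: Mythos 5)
Your proposal is correct and takes essentially the same route as the paper: both arguments eliminate $\Delta s_k$ (and $\Delta s_{k-1}$) via the conserved momenta $p_k = p$ and $p_k = \mu$, substitute into the two shape equations, and read off necessity and sufficiency from the coefficients of the $\beta^2$-quadratic terms (forcing $\kappa + \gamma\sigma\kappa^2 = 0$) and the momentum-linear $\beta$-terms (forcing $(1+\gamma\kappa)\mu = p$), which is precisely the content of the paper's condition \eqref{disc_matching_cnd_1.eqn}. Your rewriting of $L_{\tau,\sigma}$ with $\tilde\alpha = \alpha + 2\kappa\beta^2 + (1+\sigma)\gamma\kappa^2\beta^2$ and $\tilde\beta = (1+\gamma\kappa)\beta$ is a sound bookkeeping device (and your values $A = \alpha - (1+\gamma\kappa)\beta^2/\gamma$, $A_c = \alpha + \beta^2(\sigma\gamma\kappa^2 - 1/\gamma)$ check out), but it does not change the substance of the computation.
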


\begin{proof}
Solve equations \eqref{controlled_momentum_level.eqn} and
\eqref{momentum_level.eqn} for $\Delta s _k $ and
substitute the solutions in equations \eqref{dcp1.eqn} and
\eqref{controlled_cart_pendulum_1.eqn}, respectively. This process is a simple version of discrete reduction \cite{JaLeMaWe2005}. A~computation shows that the equations obtained this way are equivalent if and only if 
\begin{multline}\label{disc_matching_cnd_1.eqn} 
h \bigg[
\frac{\mu - p + \gamma \kappa \mu}{\gamma}
\frac{\partial }{\partial \phi _k } 
	\bigg(
		\beta (\phi _{k+1/2}) \frac{\Delta \phi _k}{h} + 
		\beta (\phi _{k-1/2}) \frac{\Delta \phi _{k-1}}{h} 
	\bigg)
\\
+
\frac{\kappa + \gamma \sigma \kappa ^2}{2}
	\frac{\partial }{\partial \phi _k } 
	\bigg(
		\beta ^2 (\phi _{k+1/2}) \Big(\frac{\Delta \phi _k}{h}\Big) ^2 
\\ 
	+ \beta ^2 (\phi _{k-1/2}) \Big(\frac{\Delta \phi _{k-1}}{h}\Big) ^2  
	\bigg) \bigg] = 0. 
\end{multline}
Since $ \beta (\phi) \neq 0 $ and $ \Delta \phi _k \neq 0 $ generically, equations \eqref{dcp1.eqn} and \eqref{dcp2.eqn} are equivalent if and only if 
\[
\mu - p + \gamma \kappa \mu = 0,
\qquad 
\kappa + \gamma \sigma \kappa ^2,
\]
which is equivalent to \eqref{kinetic_matching.eqn}
Note that the momentum levels
$p$ and $\mu$ {\em are not} the same.
\end{proof}
\medskip

\noindent \emph{Remark.\ } 
As $ h \to 0 $, formulae \eqref{discrete_u.eqn} and \eqref{disc_matching_cnd_1.eqn} become
\[
u = - \frac{d}{dt} \big(\gamma \tau (\phi) \dot{\phi} \big) 
\]
and 
\[
- (\kappa + \gamma \sigma \kappa ^2) 
(\beta ^2 (\phi) \ddot \phi + \beta (\phi) \beta ' (\phi) \dot{\phi} ^2)
= 0,
\]
respectively. That is, as $ h \to 0 $, one recovers the continuous-time control input and the continuous-time matching condition, $ \sigma = - 1/\gamma \kappa $. 
Condition $ \mu = p/(1 + \gamma \kappa) $ becomes redundant after taking the limit, \emph{i.e.,} the reduced dynamics can be matched on arbitrary momentum levels in the continuous-time case, which agrees with observations made in Section \ref{matching.sec}. 
 
We now discuss an alternative matching procedure. Define the
discrete controlled Lagrangian $\Lambda  ^d _{ \tau , \sigma, \lambda } ( q _k , q _{k+1}) $  by the formula 
\begin{multline*} 
h \Big[
L \big( \phi _{k+\frac12} , \Delta \phi _k /h  , \Delta s _k /h 
+ \kappa \beta ( \phi _{k+\frac12} ) \Delta \phi _k /h \big) 
\\
+ \frac{\sigma \gamma}{2} \Big( 
\kappa \beta ( \phi _{k+\frac12} ) \Delta \phi _k /h \Big) ^2  
+ \lambda \kappa \beta ( \phi _{k+\frac12} ) \Delta \phi _k /h 
\Big].
\end{multline*} 
The discrete dynamics associated with this Lagrangian is
\begin{align} 
\label{alternative_controlled_cart_pendulum_1.eqn}
\pder{\Lambda ^d  _{ \tau , \sigma, \lambda } (q _k,q  _{k+1})}{\phi _k} + 
\pder{\Lambda ^d  _{ \tau , \sigma, \lambda } (q_{k-1},q  _k)}{\phi _k} &= 0,
\\
\label{alternative_controlled_cart_pendulum_2.eqn}
\pder{\Lambda ^d  _{ \tau , \sigma } (q _k,q  _{k+1})}{s _k} + 
\pder{\Lambda ^d  _{ \tau , \sigma } (q _{k-1},q  _k)}{s _k} &= 0. 
\end{align}
The discrete controlled momentum is given by formula
\begin{align}\label{alt_discrete_controlled_momentum.eqn}
\nonumber
p _k &= - \frac{\partial }{\partial s _k} \Lambda ^d _{\tau,\sigma,\lambda} (q _k, q _{k+1}) 
\\
&=
\frac{(1 + \gamma \kappa) \beta (\phi _{k+1/2} ) 
	\Delta \phi _k + \gamma \Delta s _k}{h}
\end{align} 
and equation
\eqref{alternative_controlled_cart_pendulum_2.eqn} is equivalent to
the discrete momentum conservation \eqref{momentum_level.eqn}.
\begin{theorem}\label{alternative_discrete_matching.thm}
{\em The dynamics \eqref{dcp1.eqn} and \eqref{dcp2.eqn} restricted to the
momentum level $ p _k = p $ is equivalent to the dynamics
\eqref{alternative_controlled_cart_pendulum_1.eqn} and
\eqref{alternative_controlled_cart_pendulum_2.eqn} restricted to the
same momentum level if and only if the matching conditions
\begin{equation}\label{alternative_matching.eqn}
\sigma = - \frac{1}{\gamma \kappa}, \qquad 
\lambda  = - p.
\end{equation} 
hold.
}
\end{theorem}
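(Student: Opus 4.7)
The plan is to mimic the proof of Theorem~\ref{discrete_matching.thm}, with the new parameter $\lambda$ taking over the structural role that the rescaling $\mu\neq p$ played there. The argument splits into a preliminary observation about the $s$-momentum, a reuse of the earlier reduction, and an identification of the new linear obstruction that $\lambda$ must kill.

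First I would observe that the extra summand $\lambda\kappa\beta(\phi_{k+1/2})\Delta\phi_k/h$ in $\Lambda^d_{\tau,\sigma,\lambda}$ depends only on $\phi_k$ and $\phi_{k+1}$, not on $s_k$ or $s_{k+1}$. Consequently $\partial\Lambda^d_{\tau,\sigma,\lambda}/\partial s_k$ coincides with $\partial L^d_{\tau,\sigma}/\partial s_k$, which is why \eqref{alt_discrete_controlled_momentum.eqn} reproduces \eqref{discrete_cp_momentum.eqn} verbatim. Restricting both dynamics to the same level $p_k=p$ then yields the identical relation
\[
\gamma\,\Delta s_k = h p - (1+\gamma\kappa)\beta(\phi_{k+1/2})\Delta\phi_k
\]
on each side of the comparison, so no discrepancy arises from the group-direction reduction itself.

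Second, I would substitute this expression into \eqref{dcp1.eqn} and \eqref{alternative_controlled_cart_pendulum_1.eqn} and take the difference. The part of $\Lambda^d_{\tau,\sigma,\lambda}$ that does not involve $\lambda$ is exactly $L^d_{\tau,\sigma}$, so the corresponding contribution is already computed in the proof of Theorem~\ref{discrete_matching.thm}: it reproduces \eqref{disc_matching_cnd_1.eqn} with $\mu$ replaced by $p$. Because the momentum levels are now equal on both sides, the coefficient $(\mu - p + \gamma\kappa\mu)/\gamma$ of the linear operator
\[
L_1 := \pder{}{\phi_k}\!\left(\beta(\phi_{k+1/2})\frac{\Delta\phi_k}{h} + \beta(\phi_{k-1/2})\frac{\Delta\phi_{k-1}}{h}\right)
\]
no longer vanishes automatically; it collapses to $\kappa p$. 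This surviving term is the obstruction that $\lambda$ must absorb. A direct chain- and product-rule computation on the $\lambda$-contributions from the two adjacent slots shows, because that piece is linear in $\Delta\phi_k$ and independent of $\Delta s_k$, that it produces exactly $\lambda\kappa$ times the same operator $L_1$ and no quadratic contribution whatsoever. Hence the full difference of the two reduced shape equations is a linear multiple of $L_1$ with coefficient $\kappa p + \kappa\lambda$ (up to a sign depending on the order of subtraction) plus the unchanged quadratic term with coefficient $\tfrac12(\kappa + \gamma\sigma\kappa^2)$ acting on $\partial_{\phi_k}[\beta^2(\phi_{k+1/2})(\Delta\phi_k/h)^2 + \beta^2(\phi_{k-1/2})(\Delta\phi_{k-1}/h)^2]$. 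Genericity of $\beta$ and $\Delta\phi_k$ forces both coefficients to vanish, yielding $\sigma = -1/(\gamma\kappa)$ and $\lambda = -p$, which are the matching conditions \eqref{alternative_matching.eqn}.

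The main obstacle I anticipate is purely algebraic bookkeeping: tracking the sign and which side of the comparison the $\lambda$-contribution lands on, and verifying that no spurious cross terms between the $\lambda$-piece and the substituted $\Delta s_k$ appear (they cannot, since the $\lambda$-piece has no $s$-dependence). No new conceptual ingredient beyond the reduction already performed for Theorem~\ref{discrete_matching.thm} is required; once the signs are in place, the quadratic and linear matching conditions decouple as indicated.
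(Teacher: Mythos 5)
Your proposal is correct and follows essentially the same route as the paper's proof: solve the shared momentum level \eqref{momentum_level.eqn} for $\Delta s_k$, substitute into both shape equations, and observe that the residual is $(\kappa p + \kappa\lambda)$ times the linear operator plus $\tfrac12(\kappa+\gamma\sigma\kappa^2)$ times the quadratic one, exactly reproducing \eqref{disc_matching_cnd_2.eqn}. Your additional observations---that the $\lambda$-term is $s$-independent so the momentum map is unchanged, and that the coefficient $(\mu-p+\gamma\kappa\mu)/\gamma$ from Theorem~\ref{discrete_matching.thm} collapses to $\kappa p$ when $\mu=p$---are accurate elaborations of the computation the paper leaves implicit.
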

\begin{proof}
Similar to the proof of Theorem \ref{discrete_matching.thm}, solve equation 
\eqref{momentum_level.eqn} for $\Delta s _k $ and
substitute the solution in equations \eqref{dcp1.eqn} and
\eqref{alternative_controlled_cart_pendulum_1.eqn}, respectively. 
A computation shows that the equations obtained this way are equivalent if and only if 
\begin{multline}\label{disc_matching_cnd_2.eqn} 
h \bigg[
(\kappa p + \kappa \lambda)
\frac{\partial }{\partial \phi _k } 
	\bigg(
		\beta (\phi _{k+1/2}) \frac{\Delta \phi _k}{h} + 
		\beta (\phi _{k-1/2}) \frac{\Delta \phi _{k-1}}{h} 
	\bigg)
\\
+
\frac{\kappa + \gamma \sigma \kappa ^2}{2}
	\frac{\partial }{\partial \phi _k } 
	\bigg(
		\beta ^2 (\phi _{k+1/2}) \Big(\frac{\Delta \phi _k}{h}\Big) ^2 
\\ 
	+ \beta ^2 (\phi _{k-1/2}) \Big(\frac{\Delta \phi _{k-1}}{h}\Big) ^2  
	\bigg) \bigg] = 0,\!\!
\end{multline}
which implies \eqref{alternative_matching.eqn}. 
Note that in this case we add an extra term to the controlled
Lagrangian which eliminates the need for adjusting the momentum level.
\end{proof}
\medskip

\noindent
\emph{Remark.\ } The ratio $ \Lambda ^d _{\tau, \sigma, \lambda} / h $ becomes $ L ^d _{\tau,\sigma} + \lambda \kappa \beta (\varphi) \dot \varphi $ as $ h \to 0 $. That is, as we let the time step go to $0$, we obtain the continuous-time controlled Lagrangian modified by a term which is a derivative of the function $ \lambda \kappa \int \beta (\phi) \, d \phi $ with respect to time. It is well-known that adding such a derivative term to a Lagrangian does not change the dynamics associated with this Lagrangian. 

The stability properties of the relative equilibria $
\phi _k = 0 $, $ s _k = \text{const} $ of equations \eqref{dcp1.eqn} and \eqref{dcp2.eqn} are now investigated. 
\medskip

\begin{theorem}
{\em
The relative equilibria $ \phi _k = 0 $, $ \Delta s _k = \text{const} $ of equations \eqref{dcp1.eqn} and \eqref{dcp2.eqn}, with $ u _k $
defined by \eqref{discrete_u.eqn}, are \textbf{spectrally stable} if 
\begin{equation}\label{discrete_spectrum.eqn}
\kappa > \frac{\alpha \gamma - \beta ^2 (0)}{\beta ^2 (0)\, \gamma}.
\end{equation} 
}
\end{theorem}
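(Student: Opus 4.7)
The plan is to invoke Theorem \ref{discrete_matching.thm} to reduce the problem to linearizing the Euler--Lagrange dynamics of the controlled discrete Lagrangian \eqref{disc_controlled_lagrangian_11.eqn} about the relative equilibrium $\phi_k = 0$, $\Delta s_k = \mu h/\gamma$ on the matched momentum level $\mu = p/(1+\gamma\kappa)$. Setting $\phi_k = \epsilon\psi_k$ and $s_k = k\mu h/\gamma + \epsilon\eta_k$, the idea is to expand $L^d_{\tau,\sigma}(q_k,q_{k+1})$ to order $\epsilon^2$. Since both $\phi_{k+1/2}$ and $\Delta\phi_k$ vanish at the equilibrium, only the constant values $\beta(0)$, $\gamma$ and $V''(0)$ enter at leading order; crucially, the terms produced by $\beta'(0)$ assemble into telescoping expressions of the form $\psi_{k+1}^2 - \psi_k^2$, i.e.\ a discrete null Lagrangian that leaves the linearized Euler--Lagrange equations unchanged. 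The resulting quadratic action is the midpoint discretization of
\[
\tilde L = \tfrac12 A\dot\psi^2 + B\dot\psi\dot\eta + \tfrac12 \gamma\dot\eta^2 - \tfrac12 V''(0)\psi^2,
\]
with $A = \alpha + \kappa(1+\gamma\kappa)\beta^2(0)$ and $B = (1+\gamma\kappa)\beta(0)$.

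Next, I would linearize the discrete controlled momentum \eqref{discrete_cp_momentum.eqn} to obtain $B\Delta\psi_k + \gamma\Delta\eta_k = 0$ and use this to eliminate $\eta$, giving the reduced shape Lagrangian
\[
\tilde L^d_{\rm red}(\psi_k,\psi_{k+1}) = \tfrac{h}{2} M_{\rm eff}\Big(\tfrac{\psi_{k+1}-\psi_k}{h}\Big)^2 - \tfrac{h}{8} V''(0)(\psi_k + \psi_{k+1})^2,
\]
where
\[
M_{\rm eff} = A - \tfrac{B^2}{\gamma} = \alpha - \tfrac{(1+\gamma\kappa)\beta^2(0)}{\gamma}.
\]
The hypothesis \eqref{discrete_spectrum.eqn} is equivalent to $M_{\rm eff} < 0$, and combined with the standing instability assumption on the uncontrolled relative equilibrium --- which, given positive definiteness of the kinetic energy (so $\alpha\gamma - \beta^2(0) > 0$), forces $V''(0) < 0$ --- yields the crucial sign condition $M_{\rm eff} V''(0) > 0$.

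The discrete Euler--Lagrange equation of $\tilde L^d_{\rm red}$ is the symmetric three-term recurrence $a\psi_{k+1} + b\psi_k + a\psi_{k-1} = 0$ with $a = M_{\rm eff}/h + hV''(0)/4$ and $b = -2M_{\rm eff}/h + hV''(0)/2$. Equal outer coefficients reflect the symplectic structure of the variational integrator, so the roots of $az^2 + bz + a = 0$ are reciprocal and satisfy $z_+ + z_- = -b/a$; hence spectral stability is equivalent to $|{-b/a}| < 2$. A direct sign check using $M_{\rm eff}, V''(0) < 0$ shows that both inequalities $-b/a < 2$ and $-b/a > -2$ reduce to trivial positivity statements for every $h>0$ with $a\neq 0$, so $z_\pm$ lie strictly on the unit circle (and are distinct from $\pm 1$), yielding spectral stability. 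I expect the main technical obstacle to be the careful bookkeeping in the first step, in particular verifying that the $\beta'(0)$ contributions really do collapse into a discrete null Lagrangian; once the reduced Lagrangian $\tilde L^d_{\rm red}$ has been identified the rest of the argument is a short calculation.
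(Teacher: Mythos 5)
Your argument is correct, and your reduced recurrence is exactly the paper's linearization \eqref{linearized_discrete_dynamics.eqn}: your $M_{\mathrm{eff}} = \alpha - (1+\gamma\kappa)\beta^2(0)/\gamma = \big(\alpha\gamma-\beta^2(0)-\beta^2(0)\gamma\kappa\big)/\gamma$, so $a\psi_{k+1}+b\psi_k+a\psi_{k-1}=0$ with your $a,b$ is \eqref{linearized_discrete_dynamics.eqn} up to an overall factor of $h$, and your null-Lagrangian observation --- the $\beta'(0)$ contributions entering only through $\psi_{k+1/2}\Delta\psi_k = \tfrac12(\psi_{k+1}^2-\psi_k^2)$ --- is precisely the paper's remark that the value of $p$ does not affect the linearized dynamics. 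The two proofs diverge at both ends. You reach the linearization through Theorem \ref{discrete_matching.thm}, the controlled Lagrangian \eqref{disc_controlled_lagrangian_11.eqn} with $\sigma=-1/\gamma\kappa$, and a discrete Routh-type elimination of the group variable, whereas the paper linearizes the closed-loop reduced equations \eqref{dcp1.eqn}, \eqref{dcp2.eqn} directly; one point you should make explicit is that substituting the constraint $B\Delta\psi_k+\gamma\Delta\eta_k=0$ into the quadratic Lagrangian is legitimate here only because a nonzero momentum perturbation $B\Delta\psi_k+\gamma\Delta\eta_k=\mathrm{const}$ cancels in the difference $\Delta\eta_{k-1}-\Delta\eta_k$ appearing in the shape equation (equivalently, the discrete Routhian correction $-p\,\Delta\eta_k$ is itself a discrete null Lagrangian), so the reduced recurrence is momentum-independent --- eliminating $\Delta\eta_k$ at the level of the discrete Euler--Lagrange equations confirms this. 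At the concluding step the paper argues by energy: the linearized flow conserves the quadratic energy \eqref{quadratic_energy.eqn}, which is negative-definite precisely under \eqref{discrete_spectrum.eqn}, forcing the spectrum onto the unit circle; you instead locate the roots of the palindromic polynomial $az^2+bz+a$ directly, using reciprocity and the criterion $|b|<2|a|$, which with $m=M_{\mathrm{eff}}/h<0$ and $v=hV''(0)/4<0$ reduces to $(m-v)^2<(m+v)^2$, i.e.\ $mv>0$, which holds. Your computation buys slightly more --- distinct eigenvalues on the unit circle bounded away from $\pm1$, hence Lyapunov stability of the linearized reduced map, not merely spectral stability, uniformly in $h$ --- while the paper's energy argument exhibits the conserved negative-definite quadratic form that is reused when dissipation-emulating terms are added (cf.\ Theorem \ref{asymtotic_stability.thm}) and carries over directly to the potential-shaping case. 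One small simplification: under your hypotheses $a=m+v<0$ automatically, so the caveat ``$a\neq 0$'' is unnecessary.
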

\medskip

\begin{proof}
Let $ V'' (0) = - C $, where $ C > 0 $ (see Section~\ref{matching.sec}). 
The linearization of the reduced dynamics \eqref{dcp1.eqn}
and \eqref{dcp2.eqn}  at $ \phi = 0 $ is computed to be
\begin{multline}\label{linearized_discrete_dynamics.eqn}
\frac{\alpha \gamma - \beta ^2 (0) - \beta ^2 (0) \gamma \kappa}{h ^2 \gamma} \left(
        \Delta \phi _{k-1} - \Delta \phi _k 
\right) 
\\
+ \frac C4 \left(
\phi _{k-1} + 2 \phi _k + \phi _{k+1}
\right) = 0.
\end{multline}
Observe that the value of $p$ does not affect the linearized dynamics. 

The linearized dynamics preserves the quadratic approximation of the
discrete energy
\begin{equation}\label{quadratic_energy.eqn}
\frac{\alpha \gamma - \beta ^2 (0) - \beta ^2 (0) \gamma \kappa}{2 \gamma} 
\bigg(\frac{\Delta \phi _k}{h ^2}\bigg)^2 - \frac {C}{2} \phi _{k+1/2} ^2.
\end{equation} 
The equilibrium $ \phi _k = 0 $ of
\eqref{linearized_discrete_dynamics.eqn} is stable if and only if the
function \eqref{quadratic_energy.eqn} is negative-definite at $ \phi
_k = \phi _{k+1} = 0 $. The latter requirement is equivalent to
condition \eqref{discrete_spectrum.eqn}.
\end{proof}
\medskip

\noindent
\emph{Remark.\ }
The stability condition \eqref{discrete_spectrum.eqn} is identical to the
stability condition of the continuous-time cart-pendulum system, and it can be rewritten as 
\[
-\frac{\beta ^2 (0)}{\alpha \gamma - \beta ^2 (0)} < \sigma < 0 .
\]

The spectrum of the linear map $ (\phi _{k-1}, \phi
_k)\mapsto (\phi _k, \phi _{k+1}) $ defined by
\eqref{linearized_discrete_dynamics.eqn} belongs to the unit
circle. Spectral stability in this situation is not sufficient to conclude
nonlinear stability. 

We now modify the control input \eqref{discrete_u.eqn} by adding the
{\em kinetic discrete dissipation-emulating term} 
\begin{equation*}
\label{frictionk.eqn}
\frac{D (\Delta \phi _{k-1} + \Delta \phi _k)}{2 h}
\end{equation*} 
in order to achieve the asymptotic stabilization of the upward position
of the pendulum. In the above, $D$ is a positive constant. The discrete
momentum conservation law becomes
\begin{equation*}\label{modified_momentum_conservation.eqn}
p _k  - \frac{D \, \phi _{k+1/2}}{h} = p.
\end{equation*} 
Straightforward calculation shows that the spectrum of the matrix of the linear map $
(\phi _{k-1}, \phi _k)\mapsto (\phi _k, \phi _{k+1}) $ defined
by the reduced discrete dynamics belongs to the open unit disc. This
implies that the equilibrium $ \phi = 0 $ is asymptotically stable.

\subsection{Potential Shaping}\label{potential_shaping.sec} 
Recall that the 
the discrete dynamics associated with discrete Lagrangian \eqref{discrete_second_order_lagrangian} is governed by equations
\eqref{dcp1.eqn}~and~\eqref{dcp2.eqn}, where $ u _k $ is the control input. The goal of the procedure developed in this section is to stabilize the equilibrium $ (\phi , s) = (0, 0) $ of \eqref{dcp1.eqn} and \eqref{dcp2.eqn}.

Motivated by \eqref{cont_potential_controlled_lagrangian_11.eqn}, we define 
the second-order accurate discrete
controlled Lagrangian by the formula 
\begin{equation}
\label{discrete_second_order_controlled_lagrangian}
L^d_{\tau,\sigma,\rho,\epsilon}(q_k,q_{k+1}) = 
h L_{\tau,\sigma,\rho,\epsilon}( 
q _{k+1/2} , \Delta q _k /h 
) ,
\end{equation}
where $ q _k = ( \phi _k, s _k) $.

The dynamics associated with \eqref{discrete_second_order_controlled_lagrangian} is amended by the term $ w _k $ in the discrete shape equation:
\begin{align} 
\label{controlled_lagrangian_dynamics_1.eqn}
\pder{L ^d  _{ \tau,\sigma,\rho,\epsilon} (q _k,q  _{k+1})}{\phi _k} + 
\pder{L ^d  _{ \tau , \sigma,\rho,\epsilon } (q_{k-1},q  _k)}{\phi _k} &= -w _k ,
\\
\label{controlled_lagrangian_dynamics_2.eqn}
\pder{L ^d  _{ \tau , \sigma,\rho,\epsilon } (q _k,q  _{k+1})}{s _k} + 
\pder{L ^d  _{ \tau , \sigma,\rho,\epsilon } (q _{k-1},q  _k)}{s _k} &= 0. 
\end{align}
This term $ w _k $ is important for matching systems \eqref{dcp1.eqn}, \eqref{dcp2.eqn} and \eqref{controlled_lagrangian_dynamics_1.eqn}, \eqref{controlled_lagrangian_dynamics_2.eqn}. \emph{The presence of the terms $w _k$ represents an interesting (but manageable) departure from the continuous theory.}
Let 
\[
J _k = \rho \gamma 
\big(
{\Delta s _k}/{h} 
- (\sigma - 1) 
\tau (\phi _{k+\frac12})
{\Delta \phi _k}/{h}
\big).
\]
The following statement is proved by a straightforward calculation:
\begin{theorem}\label{discrete_potential_matching.thm}
\emph{
The dynamics \eqref{dcp1.eqn}, \eqref{dcp2.eqn} is equivalent to the dynamics
\eqref{controlled_lagrangian_dynamics_1.eqn},
\eqref{controlled_lagrangian_dynamics_2.eqn} if and only if $u _k$ and $w _k$ are given by
}
\begin{align}\label{discrete_u_potential.eqn}
\nonumber
u _k &= 
 \frac{h}{2} \left[
V _2 ' (s _{k+\frac12}) + V _2 ' (s _{k-\frac12})
\right]
\\
\nonumber
& \quad \,
- \frac{h}{2 \rho} \left[
V _\epsilon  ' (s _{k+\frac12}) + V _\epsilon ' (s _{k-\frac12})
\right]
\\
& \quad \, -
\frac{\gamma \Delta \phi _k \tau ( \phi _{k+1/2} ) 
- \gamma \Delta \phi _{k-1} \tau ( \phi _{k-1/2})}{h },
\end{align}
and
\begin{align*} 
w _k &= 
\Big(
1 - \sigma  + \frac{\sigma}{\rho}
\Big)
\Big(
\tau ( \phi _{k+\frac12})
\Big[ - \gamma \rho J _k + 
\frac{h}{2} V ' _\epsilon (y _{k+\frac12})
\Big]
\\
& \quad \, 
+ \tau ( \phi _{k-\frac12})
\Big[ \gamma \rho J _{k-1} + 
\frac{h}{2} V ' _\epsilon (y _{k-\frac12})
\Big]
\\
& \quad \, 
- \tau ' ( \phi _{k+\frac12}) J _k \Delta \phi _k 
- \tau ' ( \phi _{k-\frac12}) J _{k - 1} 
\Delta \phi _{k - 1}
\Big) , 
\end{align*} 
where $y _k$ is obtained by substituting $\phi_k$ and $s_k$ in formula \eqref{ydef}.
\end{theorem}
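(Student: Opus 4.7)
My approach is a direct expansion-and-match computation that extends the kinetic-shaping calculation of Theorem~\ref{discrete_matching.thm} to account for the potential-shaping blocks in \eqref{cont_potential_controlled_lagrangian_11.eqn}. First I would write $L^d_{\tau,\sigma,\rho,\epsilon}(q_k,q_{k+1}) = h L_{\tau,\sigma,\rho,\epsilon}(q_{k+\frac12},\Delta q_k/h)$ out fully, grouping its summands into (i)~the shifted-velocity block $L(\phi_{k+\frac12},\Delta\phi_k/h,\Delta s_k/h+\kappa\beta(\phi_{k+\frac12})\Delta\phi_k/h)$, (ii)~the $\sigma$-block $(h\sigma\gamma/2)(\kappa\beta(\phi_{k+\frac12})\Delta\phi_k/h)^2$, (iii)~the $(\rho-1)$-block quadratic in $\Delta s_k/h + (\sigma-1)\tau(\phi_{k+\frac12})\Delta\phi_k/h$, and (iv)~the potential block $h[V_2(s_{k+\frac12})-V_\epsilon(y_{k+\frac12})]$. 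Through \eqref{ydef}, the argument $y_{k+\frac12}$ depends on both $s_k$ and $\phi_k$, the latter with weight $(1/(2\gamma))(1/\sigma-(\rho-1)/\rho)\beta(\phi_{k+\frac12})$; this is the mechanism that couples shape and group derivatives in the sequel.

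I would then dispatch the $s$-equation \eqref{controlled_lagrangian_dynamics_2.eqn} first. Only blocks (i), (iii), and (iv) depend on $s_k$; the kinetic contributions combine into precisely the $J_k$-shifted discrete momenta introduced in the statement, so that summing $\partial L^d_{\tau,\sigma,\rho,\epsilon}(q_k,q_{k+1})/\partial s_k + \partial L^d_{\tau,\sigma,\rho,\epsilon}(q_{k-1},q_k)/\partial s_k = 0$ becomes a discrete conservation law for this shifted momentum. Subtracting that identity from the original $s$-equation \eqref{dcp2.eqn}, and carefully tracking the $\rho^{-1}$ that descends from the $(\rho-1)$-block onto the $V'_\epsilon$ contribution via the chain rule through $y$, the discrepancy collapses to exactly the three lines of \eqref{discrete_u_potential.eqn}, whose last line reproduces the kinetic-shaping formula \eqref{discrete_u.eqn}.

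For the $\phi$-equation, I would compute $\partial L^d_{\tau,\sigma,\rho,\epsilon}/\partial\phi_k$ at both $(q_k,q_{k+1})$ and $(q_{k-1},q_k)$, subtract the corresponding partials of $h L(q_{k+\frac12},\Delta q_k/h)$, and classify the residual into four stages. The $V_1$-terms cancel because $V_1$ is unaltered; the purely kinetic $\phi_k$-derivatives reduce to the identity \eqref{disc_matching_cnd_1.eqn} after enforcing $\sigma=-1/(\gamma\kappa)$; the terms coming from differentiating $\tau(\phi_{k\pm\frac12})$ inside the $\sigma$- and $(\rho-1)$-blocks produce the $\tau'(\phi_{k\pm\frac12})\, J\,\Delta\phi$ pieces in $w_k$; and the terms from the chain rule on $V_\epsilon(y_{k\pm\frac12})$ and from the $\phi$-dependence inside the $(\rho-1)$-block assemble, under the common scalar $1-\sigma+\sigma/\rho$, into the $-\gamma\rho J_k + \tfrac{h}{2}V'_\epsilon(y_{k+\frac12})$ combinations of the stated formula.

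The main obstacle is purely combinatorial: every midpoint evaluation contributes a chain-rule factor $\tfrac12$, every appearance of $\Delta\phi_k$ or $\Delta\phi_{k-1}$ flips sign between the two discrete Lagrangians, and the three kinetic blocks cross-couple through $\beta$ and $\tau$ in ways that must ultimately pack into the single prefactor $1-\sigma+\sigma/\rho$. Once this scalar is recognized as the coefficient of $\beta(\phi)$ in $\partial y/\partial\phi$ from \eqref{ydef}, the identification becomes algebraic. Non-degeneracy of $\beta$ together with the generic condition $\Delta\phi_k\neq 0$ already used in Theorem~\ref{discrete_matching.thm} supplies the ``only if'' direction, since the coefficients of the independent monomials in $\Delta\phi_k,\Delta\phi_{k-1},V'_\epsilon,\tau'$ pin down $u_k$ and $w_k$ uniquely.
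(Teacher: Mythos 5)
Your overall plan---expand $L^d_{\tau,\sigma,\rho,\epsilon}$ blockwise, determine $u_k$ by comparing the group equations and $w_k$ by comparing the shape equations---is indeed the ``straightforward calculation'' the paper alludes to (its proof is a single sentence and gives no details), and much of your bookkeeping is sound: the $V_1$ cancellation, the $V_2$ terms surviving only on the uncontrolled side since the added $+V_2(s)$ in \eqref{cont_potential_controlled_lagrangian_11.eqn} cancels the $-V_2(s)$ inside $L$, the $\tau'(\phi_{k\pm\frac12})J\,\Delta\phi$ terms arising from differentiating $\tau$ at the midpoints, and the recognition that the prefactor $1-\sigma+\sigma/\rho$ is tied to $\partial y/\partial\phi$ via \eqref{ydef}. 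However, your treatment of the $s$-equation has a genuine gap. First, \eqref{controlled_lagrangian_dynamics_2.eqn} is \emph{not} a discrete conservation law for the shifted momentum: the term $-V_\epsilon(y_{k+\frac12})$ in the discrete controlled Lagrangian depends on $s_k$ (with $\partial y/\partial s=1$), so the controlled $s$-equation is the \emph{balance} law $p^c_k - p^c_{k-1} = -\tfrac h2\bigl[V_\epsilon'(y_{k+\frac12}) + V_\epsilon'(y_{k-\frac12})\bigr]$, where $p^c_k = \beta(\phi_{k+\frac12})\Delta\phi_k/h + \rho\gamma\,\Delta s_k/h + \bigl[1+(\rho-1)(\sigma-1)\bigr]\gamma\tau(\phi_{k+\frac12})\Delta\phi_k/h$. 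Second, and more seriously, simply ``subtracting that identity from the original $s$-equation'' does not collapse to \eqref{discrete_u_potential.eqn}: the difference of the momenta, $p_k - p^c_k$, contains the group-velocity term $(1-\rho)\gamma\,\Delta s_k/h$, so the residual after subtraction still involves $\Delta s_k$ and $\Delta s_{k-1}$, whereas the stated $u_k$ contains none. The missing step is to solve the controlled momentum balance for the difference of the $\rho\gamma\,\Delta s/h$ terms and substitute back; it is this inversion of the coefficient $\rho\gamma$ of $\Delta s_k$ that generates the factor $1/\rho$ multiplying the $V_\epsilon'$ terms (through $1+\tfrac{1-\rho}{\rho}=\tfrac1\rho$). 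Your attribution of that $\rho^{-1}$ to ``the chain rule through $y$ descending from the $(\rho-1)$-block'' cannot be correct: the $(\rho-1)$-block is purely kinetic and contains no $V_\epsilon$, and the chain rule through $y$ contributes only the factor $\partial y/\partial s=1$ in the group slot.

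A related misstep occurs in your $\phi$-equation stage: you invoke identity \eqref{disc_matching_cnd_1.eqn} ``after enforcing $\sigma=-1/(\gamma\kappa)$,'' but that identity was derived in Theorem~\ref{discrete_matching.thm} only \emph{after} restricting to the momentum levels $p_k=p$ and $p_k=\mu$, a reduction that is unavailable here: once $V_2(s)$ and $V_\epsilon(y)$ break the $G$-symmetry there is no conserved momentum, and the full second-order shape equations must be compared directly, with the entire mismatch---including the $\Delta s$-dependent kinetic cross-terms left over after the substitution described above---assigned to $w_k$. This is precisely why $w_k$ is expressed through $J_k$, which packages $\Delta s_k$ together with $\Delta\phi_k$, rather than through shape data alone; an outline that matches the shape equations on a momentum level, as yours implicitly does, would never produce those $J_k$-dependent terms.
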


\smallskip

\noindent
\emph{Remark.\ }
Equations \eqref{dcp1.eqn}, \eqref{dcp2.eqn} define closed-loop dynamics when $ u _k $ is given by formula \eqref{discrete_u_potential.eqn}.  
The terms $ w _k $ vanish when $ \beta (\phi) = \text{const} $ as they become proportional to the left-hand side of equation \eqref{controlled_lagrangian_dynamics_2.eqn}. 

As in the case of kinetic shaping, the stability analysis is done by means of an analysis of the spectrum of the linearized discrete equations. We assume that the equilibrium to be stabilized is $ (\phi _k, s _k) = (0,0) $.  

\begin{theorem}\label{linear_potential_stability.thm} 
\emph{
The equilibrium $ (\phi _k , s _k) = (0, 0) $ of equations \eqref{controlled_lagrangian_dynamics_1.eqn} and \eqref{controlled_lagrangian_dynamics_2.eqn} is \textbf{spectrally stable} if 
\begin{equation}\label{discrete_spectrum_potential.eqn}
- \frac{\beta ^2 (0)}{\alpha \gamma - \beta ^2 (0)} < \sigma < 0, \quad \rho < 0, \quad \text{and} \quad V _\epsilon '' (0) < 0.\!
\end{equation}
}
\end{theorem}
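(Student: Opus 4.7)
The plan is to mimic the strategy used for Theorem~3 on kinetic shaping: linearize the closed-loop system \eqref{controlled_lagrangian_dynamics_1.eqn}, \eqref{controlled_lagrangian_dynamics_2.eqn} about $(\phi_k,s_k)=(0,0)$, exhibit a conserved quadratic form that is the second-order Taylor expansion of the discrete controlled energy associated with $L^d_{\tau,\sigma,\rho,\epsilon}$, and show that the three conditions in \eqref{discrete_spectrum_potential.eqn} make this quadratic form sign-definite. Once a definite quadratic first integral is produced for a discrete Lagrangian map, its spectrum is forced onto the unit circle, which is spectral stability.

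First I would Taylor-expand $L^d_{\tau,\sigma,\rho,\epsilon}(q_k,q_{k+1})$ to quadratic order in $(\phi_k,s_k,\Delta\phi_k,\Delta s_k)$. Because $\beta(\phi_{k+1/2})$ and $\tau(\phi_{k+1/2})=\kappa\beta(\phi_{k+1/2})$ multiply $\Delta\phi_k$, at quadratic order they can be frozen to $\beta(0)$ and $\kappa\beta(0)$; the potential contributes $\tfrac{h}{2}V_2''(0)s_{k+1/2}^2 - \tfrac{h}{2}V_\epsilon''(0)y_{k+1/2}^2$ with the linearization of \eqref{ydef} giving $y_{k+1/2}=s_{k+1/2}-c\,\phi_{k+1/2}$ where $c=\beta(0)\gamma^{-1}(\sigma^{-1}-(\rho-1)/\rho)$. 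The resulting linearized closed-loop equations are the discrete Euler--Lagrange equations (in the midpoint sense) for the quadratic controlled Lagrangian
\[
L^{\mathrm{quad}}=\tfrac12\dot q^{T}M\dot q-\tfrac12 q^{T}K q,
\]
where $M$ is the $2\times 2$ Hessian of the controlled kinetic energy at the equilibrium and $K=\mathrm{diag}(-c^2 V_\epsilon''(0)+\cdots,\,V_2''(0)-V_\epsilon''(0))$ is the corresponding Hessian of the effective potential, with appropriate off-diagonal coupling $cV_\epsilon''(0)$.

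Second, I would invoke the standard fact that a second-order accurate midpoint discretization of a quadratic Lagrangian preserves, to the order required, the quadratic energy $E_k=\tfrac12(\Delta q_k/h)^{T}M(\Delta q_k/h)+\tfrac12 q_{k+1/2}^{T}K q_{k+1/2}$. I would then read off the signatures of $M$ and $K$ under \eqref{discrete_spectrum_potential.eqn}. The signature of the $\phi$-block of $M$ (once $\kappa=-1/(\sigma\gamma)$ is used in the matching) is controlled exactly as in the kinetic case, and the inequality $-\beta^2(0)/(\alpha\gamma-\beta^2(0))<\sigma<0$ is the condition that makes that block of the correct sign; the condition $\rho<0$ handles the $s$-block of $M$, which comes from the $\tfrac12(\rho-1)\gamma(\dot s+\cdots)^2$ summand in \eqref{cont_potential_controlled_lagrangian_11.eqn}. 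The condition $V_\epsilon''(0)<0$ (together with $V_2$ being at a local extremum at $s_e=0$) fixes the sign of $K$ to match that of $M$, so $E_k$ is definite. The conserved definite quadratic form then confines the four eigenvalues of the linearized update map to the unit circle, giving spectral stability.

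The main obstacle is bookkeeping: verifying that the non-Lagrangian forcing $w_k$ in \eqref{controlled_lagrangian_dynamics_1.eqn} contributes only terms already accounted for by the controlled Lagrangian's own variation at quadratic order. Inspection of the explicit formula for $w_k$ shows that every summand is a product of $\tau(\phi_{k\pm 1/2})$ or $\tau'(\phi_{k\pm 1/2})$ with the quantities $J_k$, $\Delta\phi_k$, or $V_\epsilon'(y_{k\pm 1/2})$; in all cases at least one factor vanishes at the equilibrium, so the \emph{linear} part of $w_k$ in $(\phi_k,s_k)$ arises only from the constant values $\tau(0)$ and $V_\epsilon''(0)$, producing exactly the coupling that turns the $y$-dependence of $V_\epsilon$ into the off-diagonal entries of $K$ predicted above. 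This is what the Remark after Theorem~4 anticipated: when $\beta$ is constant the $w_k$ vanish entirely, and in general they only redistribute the $V_\epsilon'$ contributions between the shape and group equations. After this verification the definiteness argument proceeds as in the proof of Theorem~3, and the continuous-time stability inequalities carry over verbatim.
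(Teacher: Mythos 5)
Your overall route is the paper's own: linearize \eqref{controlled_lagrangian_dynamics_1.eqn}--\eqref{controlled_lagrangian_dynamics_2.eqn} by freezing $\beta$, $V_1$, $V_\epsilon$ at their quadratic approximations, exhibit a conserved quadratic discrete energy (the paper's \eqref{potential_shaping_quadratic_energy.eqn}, whose variable $x_{k+1/2}$ from \eqref{xdef} is exactly your linearized $y$), and deduce spectral stability from definiteness; your sign bookkeeping (the $\sigma$-interval from the shape block, $\rho<0$ from the $\tfrac{\gamma\rho}{2h}\Delta s_k^2$ term, $V_\epsilon''(0)<0$ for the potential block, with $V_1''(0)<0$ given) also agrees. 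The genuine defect is your treatment of $w_k$, which is the one delicate step here --- the paper's proof explicitly stresses the \emph{absence} of $w_k$ from the linearized shape equation \eqref{linearized_1.eqn}. You assert that the linear part of $w_k$ supplies ``exactly the coupling that turns the $y$-dependence of $V_\epsilon$ into the off-diagonal entries of $K$.'' That is false, and if taken literally it would sink your proof: the off-diagonal $\phi$--$s$ coupling already comes from the term $-hV_\epsilon(y_{k+1/2})$ inside $L^d_{\tau,\sigma,\rho,\epsilon}$ itself, via $\partial y_{k+1/2}/\partial\phi_k$; and any genuinely non-variational linear forcing surviving the linearization would destroy exact conservation of the quadratic energy, which is precisely what pins the eigenvalues of the update map to the unit circle (approximate conservation, your ``to the order required,'' yields no spectral conclusion). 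What actually happens is this: freezing $\beta$ at $\beta(0)$ kills the $\tau'$ summands, and the surviving linear part of $w_k$,
\[
\Big(1-\sigma+\frac{\sigma}{\rho}\Big)\,\tau(0)\Big[\gamma\rho\,\big(J_{k-1}-J_k\big)+\frac{h}{2}\,V_\epsilon''(0)\big(y_{k+1/2}+y_{k-1/2}\big)\Big],
\]
is proportional to the left-hand side of the linearized group equation \eqref{linearized_2.eqn} and therefore vanishes identically along solutions --- this is the constant-$\beta$ remark following Theorem \ref{discrete_potential_matching.thm}, which you quote but then replace with an incorrect mechanism. Only after this cancellation is the linearized closed loop an honest \emph{unforced} discrete Euler--Lagrange system, so that the conserved-energy argument may be applied at all.

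Two smaller slips. First, your $K$ contains a spurious $V_2''(0)$, and your parenthetical assumption that $V_2$ has a local extremum at $s=0$ is both unnecessary and false for the motivating example (on the incline $V_2$ is linear in $s$): the $+V_2(s)$ added in \eqref{cont_potential_controlled_lagrangian_11.eqn} cancels the $-V_2(s)$ inside $L$ exactly, so the controlled potential is $V_1(\phi)+V_\epsilon(y)$ and conditions \eqref{discrete_spectrum_potential.eqn} are independent of $V_2$, as they must be; the influence of $V_2$ enters only through the feedforward part of $u_k$ in \eqref{discrete_u_potential.eqn}. Second, the conservation statement should be exact, not asymptotic: the midpoint variational discretization of a quadratic Lagrangian conserves the quadratic energy \eqref{potential_shaping_quadratic_energy.eqn} exactly, and this exactness is what the spectral argument consumes. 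With the $w_k$ step repaired as above, your argument coincides with the paper's proof.
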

\medskip
\begin{proof}
The linearized discrete equations are 
\begin{align} 
\label{linearized_1.eqn}
\pder{\mathcal L ^d  _{ \tau,\sigma,\rho,\epsilon} (q _k,q  _{k+1})}{\phi _k} + 
\pder{\mathcal L ^d  _{ \tau , \sigma,\rho,\epsilon } (q_{k-1},q  _k)}{\phi _k} &= 0 ,
\\
\label{linearized_2.eqn}
\pder{\mathcal L ^d _{ \tau , \sigma,\rho,\epsilon } (q _k,q  _{k+1})}{s _k} + 
\pder{\mathcal L ^d _{ \tau , \sigma,\rho,\epsilon } (q _{k-1},q  _k)}{s _k} &= 0, 
\end{align}
where $ \mathcal{L} ^d _{ \tau , \sigma,\rho,\epsilon } (q _k,q  _{k+1}) $ is the quadratic approximation of $ L ^d _{ \tau , \sigma,\rho,\epsilon } $ at the equilibrium 
(\emph{i.e.}, $ \beta (\phi) $,  $ V _1 (\phi) $, and $ V _\epsilon (y) $ in $ L ^d _{ \tau , \sigma,\rho,\epsilon } $ are replaced by $ \beta (0) $, $ \frac12 V _1 '' (0) \phi ^2 $, and  $ \frac 12 V _\epsilon '' (0) y ^2 $, respectively). \emph{Note the absence of the term $ w _k $ in equation~\eqref{linearized_1.eqn}. }

The linearized dynamics preserves the quadratic approximation of the
discrete energy $ E _{k, k+1} $ defined by 
\begin{multline}\label{potential_shaping_quadratic_energy.eqn} 
\frac{\alpha \gamma \sigma ^2 - 
\beta (0) ^2 (\sigma-1)(\rho (\sigma-1) - \sigma )}
{2 \gamma \sigma ^2 h} 
\Delta \phi _k^2 
\\
+ \frac{\beta (0) \rho (\sigma - 1)}{\sigma h}
\Delta \phi _k \Delta s _k 
+ \frac{\gamma \rho}{2 h} \Delta s _k ^2 
\\
+ \frac{h}{2} V _1 '' (0) \phi _{k+\frac12} ^2 
+ \frac{h}{2} V _\epsilon '' (0) x _{k+\frac12} ^2 
,
\end{multline} 
where
\begin{equation} \label{xdef} 
x  = s  + 
\left(
\frac{\rho - 1}{\rho} - \frac{1}{\sigma}
\right)
\frac{\beta (0)}{\gamma}\, \phi .
\end{equation}
Since $ V _1 '' (0) $ is negative, 
the equilibrium $ (\phi _k, s _k) = (0,0) $ of equations 
\eqref{linearized_1.eqn} and \eqref{linearized_2.eqn} is stable if 
the quadratic approximation of the discrete controlled energy \eqref {potential_shaping_quadratic_energy.eqn} is negative-definite. The latter requirement is equivalent to
conditions \eqref {discrete_spectrum_potential.eqn}. The spectrum of the linearized discrete dynamics in this case belongs to the unit circle. \end{proof}
\smallskip

\noindent
\emph{Remarks.\ }
Spectral stability in this situation is not sufficient to conclude
nonlinear stability.
The stability conditions \eqref {discrete_spectrum_potential.eqn} are identical to the stability conditions of the corresponding continuous-time system.

Following \cite{BCLM}, we now modify the control input \eqref {discrete_u_potential.eqn} by adding the
\emph{potential discrete dissipation-emulating term} 
\begin{equation}\label{frictionp.eqn}
\frac{D (\Delta y _{k-1} + \Delta y _k)}{2 h}
\end{equation} 
in order to achieve the asymptotic stabilization of the equilibrium $ (\phi _k, s _k) = (0,0) $. In the above, $D$ is a constant. 
The linearized discrete dynamics becomes
\begin{align} 
\nonumber 
\label{linearized_damped_1.eqn}
\pder{\mathcal L ^d  _{ \tau,\sigma,\rho,\epsilon} (q _k,q  _{k+1})}{\phi _k} + 
\pder{\mathcal L ^d  _{ \tau , \sigma,\rho,\epsilon } (q_{k-1},q  _k)}{\phi _k} \qquad \quad &
\\
= - \left( 
\frac{\rho - 1}{\rho} - \frac{1}{\sigma}
\right)
\frac{\beta (0)}{\gamma} 
\frac{D (\Delta x _{k-1} + \Delta x _k)}{2 h} &,
\\
\nonumber
\label{linearized_damped_2.eqn}
\pder{\mathcal L ^d _{ \tau , \sigma,\rho,\epsilon } (q _k,q  _{k+1})}{s _k} + 
\pder{\mathcal L ^d _{ \tau , \sigma,\rho,\epsilon } (q _{k-1},q  _k)}{s _k} \qquad \quad &
\\
= - \frac{D (\Delta x _{k-1} + \Delta x _k)}{2 h} &, 
\end{align}
where $ x_k$ is obtained by substituting $\phi_k$ and $s_k$ in formula \eqref{xdef}.

\begin{theorem} \label{asymtotic_stability.thm}
\emph{
The equilibrium $ (\phi _k, s _k) = (0, 0) $ of equations \eqref{linearized_damped_1.eqn} and \eqref{linearized_damped_2.eqn} is asymptotically stable if conditions \eqref{discrete_spectrum.eqn} are satisfied and $ D $ is positive.
}
\end{theorem}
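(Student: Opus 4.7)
The plan is to establish asymptotic stability by a discrete Lyapunov/LaSalle argument, using the quadratic discrete controlled energy \eqref{potential_shaping_quadratic_energy.eqn} as a Lyapunov function and treating the added damping in \eqref{linearized_damped_1.eqn}--\eqref{linearized_damped_2.eqn} as a Rayleigh-type dissipation acting on the single combined coordinate $x$ from \eqref{xdef}. By Theorem \ref{linear_potential_stability.thm}, the conditions \eqref{discrete_spectrum_potential.eqn} make $E_{k,k+1}$ negative-definite and conserved along the undamped linearization \eqref{linearized_1.eqn}--\eqref{linearized_2.eqn}, so $V_k := -E_{k-1,k}$ is positive-definite in $(\phi_{k-1}, s_{k-1}, \phi_k, s_k)$ and constant under the unperturbed flow.

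The first step is to compute the one-step change $V_{k+1}-V_k$ along the damped dynamics. Pairing \eqref{linearized_damped_1.eqn} with $\Delta\phi_k$ and \eqref{linearized_damped_2.eqn} with $\Delta s_k$, summing, and invoking the conservation identity from the undamped case collapses all quadratic terms except the contribution of the right-hand sides, which combine (thanks to the precise coefficient $(\rho-1)/\rho - 1/\sigma$ appearing in both \eqref{linearized_damped_1.eqn} and \eqref{xdef}) into a single negative-semidefinite quadratic form
\begin{equation*}
V_{k+1} - V_k \;=\; -\frac{D}{4h}\,(\Delta x_{k-1} + \Delta x_k)^2,
\end{equation*}
with $\Delta x_k = \Delta s_k + \bigl(\tfrac{\rho-1}{\rho}-\tfrac{1}{\sigma}\bigr)\tfrac{\beta(0)}{\gamma}\Delta\phi_k$. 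This is the very reason the dissipation-emulating term \eqref{frictionp.eqn} is written in $x$ rather than in $s$. Since $V_k \ge 0$ is nonincreasing, orbits are bounded and $V_{k+1}-V_k \to 0$, forcing $\Delta x_{k-1}+\Delta x_k \to 0$ along every trajectory.

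A discrete LaSalle invariance argument then confines $\omega$-limit sets to the largest forward-invariant subset $M$ of $\{\Delta x_{k-1}+\Delta x_k = 0\}$. On $M$ the dissipation is inert, so trajectories satisfy the undamped equations \eqref{linearized_1.eqn}--\eqref{linearized_2.eqn} together with the constraint $\Delta x_k \equiv 0$; the key linear-algebra step is that this overdetermined system, exploiting the nondegenerate coupling through $\beta(0) \ne 0$, admits only $\phi_k = s_k = 0$. Combined with positive-definiteness of $V_k$, this yields asymptotic stability. The main obstacle is precisely this last reduction, because the damping couples to the single combination $x$ rather than to $\phi$ and $s$ separately, so one must use the cross-coupling in the linearized equations to propagate $\Delta x \equiv 0$ to the whole state. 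A cleaner alternative, well-suited to the linear setting, is to bypass LaSalle and apply the Schur--Cohn (Jury) criterion directly to the characteristic polynomial of the $4\times 4$ update map: for $D = 0$ all roots lie on the unit circle by Theorem \ref{linear_potential_stability.thm}, and a first-order perturbation calculation in $D$ of these roots — using the energy identity above to sign the perturbation — shows that they move strictly into the open unit disc when $D>0$.
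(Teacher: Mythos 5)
Your main argument is essentially the paper's own proof: the paper multiplies \eqref{linearized_damped_1.eqn} and \eqref{linearized_damped_2.eqn} by the symmetric averages $(\Delta \phi_{k-1}+\Delta \phi_k)/2$ and $(\Delta s_{k-1}+\Delta s_k)/2$ to obtain the energy--dissipation identity $E_{k,k+1}=E_{k-1,k}+\frac{Dh}{2}\bigl(\frac{\Delta x_{k-1}+\Delta x_k}{2h}\bigr)^2$ (your $V_k=-E_{k-1,k}$ is this same identity up to an immaterial constant), then makes exactly your LaSalle-type observation that $\Delta x_{k-1}+\Delta x_k\not\equiv 0$ along non-equilibrium solutions --- which the paper, like you, asserts without carrying out the linear-algebra step you flag as the main obstacle --- and concludes from linearity of the dynamics that the spectrum lies in the open unit disk. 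Two minor points: the telescoping of the quadratic terms requires pairing with the symmetric multipliers above, not with $\Delta\phi_k$ and $\Delta s_k$ alone as you describe (your stated dissipation formula is in fact what the symmetric pairing produces, via $\Delta x_k = \Delta s_k + \bigl(\tfrac{\rho-1}{\rho}-\tfrac{1}{\sigma}\bigr)\tfrac{\beta(0)}{\gamma}\Delta\phi_k$); and your Schur--Cohn/eigenvalue-perturbation alternative does not appear in the paper and is only sketched, but it is not needed for the result.
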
 
\begin{proof}
Multiplying equations \eqref{linearized_damped_1.eqn} and \eqref{linearized_damped_2.eqn} by $ (\Delta \phi _{k-1} + \Delta \phi _k )/2 $ and $ (\Delta s _{k-1} + \Delta s _k )/2 $, respectively, we obtain 
\[
E _{k, k+1 } = E _{k-1, k } + \frac{D h}{2} 
\left(
\frac{\Delta x _{k-1} + \Delta x _k}{2h}
\right) ^2,
\] 
where $ E _{k, k+1} $ is the quadratic approximation of the discrete energy \eqref{quadratic_energy.eqn}. Recall that $ E _{k, k+1} $ is negative-definite (see the proof of Theorem \ref{linear_potential_stability.thm}). It is possible to show that, in some neighborhood of $ (\phi _k, s _k) = (0,0) $,  the quantity $ \Delta x _{k-1} + \Delta x _k \not\equiv 0 $ along a solution of equations \eqref{linearized_damped_1.eqn} and \eqref{linearized_damped_2.eqn} unless this solution is the equilibrium $ (\phi _k, s _k) = (0,0) $. Therefore, $  E _{k, k+1} $ increases along non-equilibrium solutions of \eqref{linearized_damped_1.eqn} and \eqref{linearized_damped_2.eqn}. Since equations \eqref{linearized_damped_1.eqn} and \eqref{linearized_damped_2.eqn} are linear, this is only possible if the spectrum of \eqref{linearized_damped_1.eqn} and \eqref{linearized_damped_2.eqn} is inside the open unit disk, which implies asymptotic stability of the equilibrium of both linear system \eqref{linearized_damped_1.eqn} and \eqref{linearized_damped_2.eqn} and nonlinear system \eqref{dcp1.eqn} and \eqref{dcp2.eqn} with potenital discrete dissipation-emulating term \eqref{frictionp.eqn} added to $ u _k $.
\end{proof}

\section{Stabilization of the Discrete \\ Pendulum on the Cart}
\label{disc_cart_pendulum.sec} 
A basic example treated in earlier papers in the smooth setting is
the {\em pendulum on a cart}.  Let $s$ denote the position of the cart
on the $s$-axis, $\phi$ denote the angle of the pendulum with
the upright vertical, and $\psi$ denote the elevation angle of the incline, as in Figure \ref{cart.figure}.
\begin{figure}[ht]
\begin{center}
\begin{overpic}[width=0.32\textwidth]
{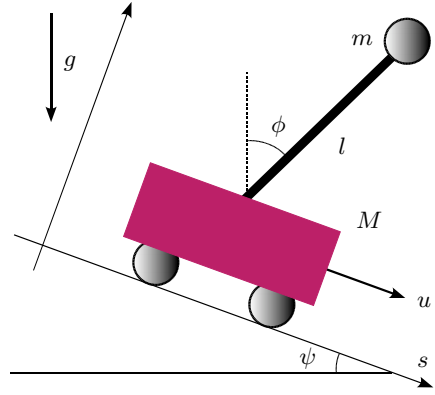}
\put(94,6){\small $s$}
\put(94,19,8){\small $u$}
\put(79,80){\small $m$}
\put(76,55){\small $l$}
\put(80,38){\small $M$}
\put(13,75){\small $g$}
\put(60.5,60){\small $\phi$}
\put(67,6.76){\small $\psi$}
\end{overpic}
\caption{\footnotesize
The pendulum on a cart going down an inclined plane under gravity. The control force is in the direction $s$, the overall motion of the cart.}
\label{cart.figure}
\end{center}
\end{figure}
The configuration space for this system is $Q = S \times G = S^1
\times \mathbb{R}$, with the first factor being the pendulum angle
$\phi$ and the second factor being the cart position $s$. The symmetry group $G$ of the kinetic energy of the pendulum-cart system is that of translation in the $s$ variable, so $G = \mathbb{R}$. 

The length of the pendulum is $l$, the mass of the
pendulum is $m$ and that of the cart is $M$.

For the cart-pendulum system, $\alpha$, $\beta (\phi) $, $\gamma$, are given by 
\begin{equation}\label{cart_pendulum_metric.eqn} 
\alpha = m l^2, \quad \beta (\phi) = ml \cos (\phi - \psi), \quad \gamma = M + m.
\end{equation}
The potential energy is $ V (q) = V _1 (\phi) + V _2 (s) $, where 
\[
V _1 (\phi) = - mgl \cos \phi , \quad V _2 (s) = - \gamma g s \sin \psi .
\]
Note that $\alpha\gamma-\beta^2 (\phi) > 0$, and that the potential energy becomes $G$-invariant when the plane is horizontal, \emph{i.e.,} when $ \psi = 0 $.
 
Since the Lagrangian for the cart-pendulum system is of the form \eqref{continuous_11_lagr.eqn}, the discrete control laws \eqref{discrete_u.eqn} and \eqref{discrete_u_potential.eqn} stabilize the upward vertical equilibrium of the \emph{pendulum}. As in the continuous-time setting, the \emph{cart} is stabilized by symmetry-breaking controller \eqref{discrete_u_potential.eqn} and is not stabilized by symmetry-preserving controller \eqref{discrete_u.eqn}. 

Simulations of the discrete cart-pendulum system are shown in the next section.

\section{Simulations}\label{simulations.sec}

Simulating the behavior of the discrete controlled Lagrangian system
involves viewing equations \eqref{dcp1.eqn} and \eqref{controlled_lagrangian_dynamics_2.eqn} as an implict update map $\Phi:(q_{k-2},
q_{k-1})\mapsto(q_{k-1},q_k)$. This presupposes that the initial
conditions are given in the form $(q_0,q_1)$; however it is generally
preferable to specify the initial conditions as $(q_0,\dot q_0)$. 
This is achieved by solving the 
boundary condition
\[
\frac{\partial L}{\partial \dot q}(q_0,\dot q_0) + D_1 L^d(q_0,q_1) +
F^d_1(q_0,q_1)
= 0
\]
for $q_1$. Once the initial conditions are expressed in the form $(q_0,q_1)$, the discrete evolution can be obtained using the implicit update
map $\Phi$.

We first consider the case of kinetic shaping on a level surface (with $\psi=0$), when $\kappa$ is twice the critical value, and without dissipation. Here, $h=0.05\,\textrm{sec}$, $m=0.14\,\textrm{kg}$, $M=0.44\,\textrm{kg}$, and $l=0.215\,\textrm{m}$. As shown in Figure~\ref{fig:discrete_kinetic_nodiss}, the $\phi$ dynamics is stabilized, but since there is no dissipation, the oscillations are sustained. The $s$ dynamics exhibits both a drift and oscillations, as potential shaping is necessary to stabilize the translational dynamics.

When dissipation is added, the $\phi$ dynamics is asymptotically stabilized, as shown in Figure~\ref{fig:discrete_kinetic_diss}. However, even though the oscillations are damped, the $s$ dynamics retains a drift motion, as expected.

We next consider the case of potential shaping on an inclined surface (with  $\psi=\frac{\pi}{9}\,\textrm{radians}$) without dissipation, with the other physical parameters as before. Here, our goal is to regulate the cart at $s=0$ and the pendulum at $\phi=0$. 
We set $ V _\varepsilon = - \frac{\varepsilon}{2} y ^2 $. The control gains are chosen to be $\kappa=20$, $\rho=-0.02$, and $\epsilon=0.00001$. It is worth noting that the discrete dynamics remain bounded near the desired equilibrium, and this behavior persists even for significantly longer simulation runs involving $10^6$ time-steps. To more clearly visualize the dynamics, we only include a 4000 time-step segment of this computation in Figure~\ref{fig:discrete_potential_nodiss}.
The exceptional stability of the discrete controlled trajectory can presumably be understood in terms of the bounded energy oscillations characteristic of symplectic and variational integrators.

\begin{figure}[ht]
\hspace{-1.15em}
\begin{overpic}
[scale=.51]
{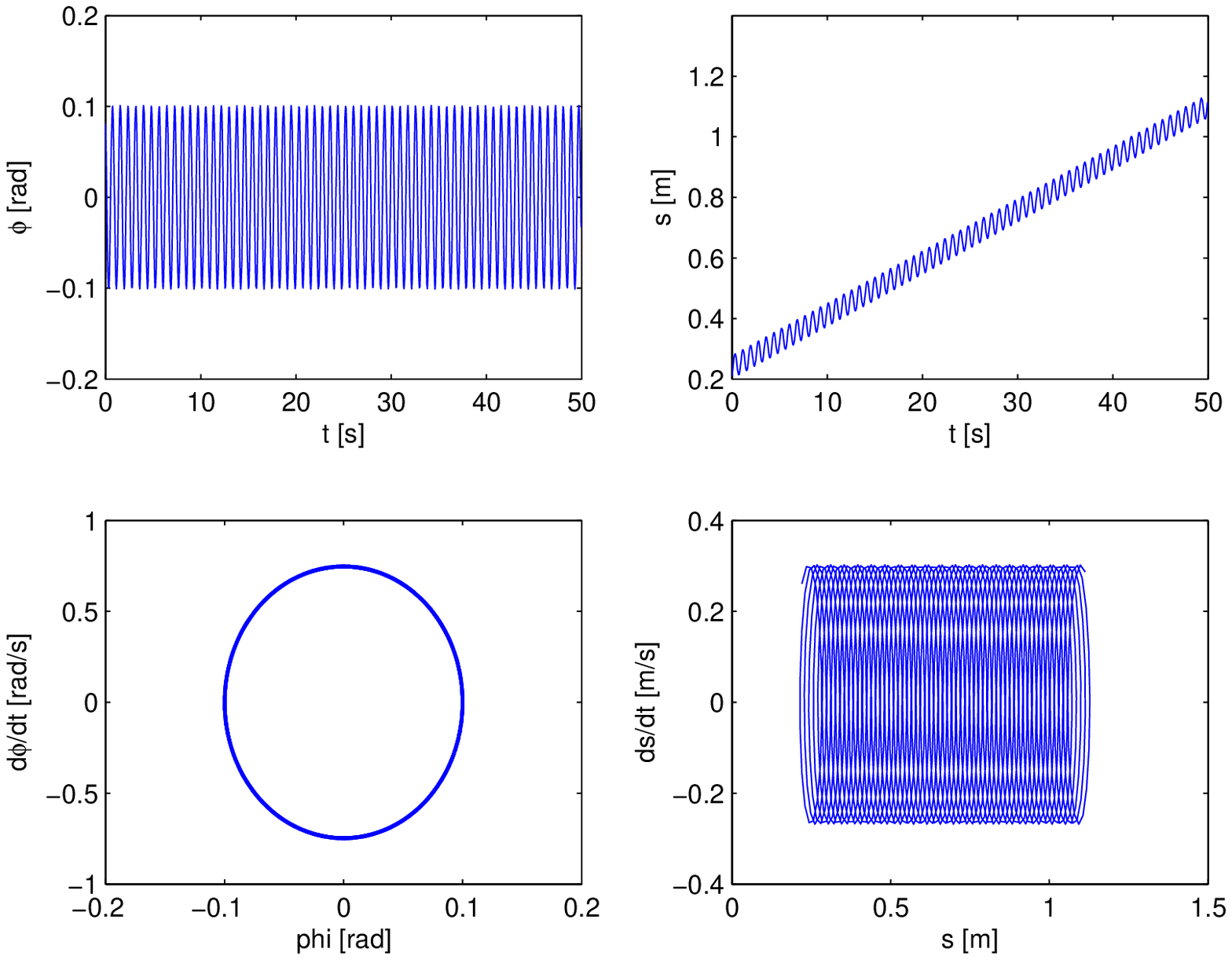}
\put(28.3,10.55)
	{\begin{rotate}{0}
	{\includegraphics[width=.015\textwidth]
	{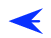}}
	\end{rotate}
	}
\put(31.5,31.67)
	{\begin{rotate}{180}
	{\includegraphics[width=.015\textwidth]
	{arrow}}
	\end{rotate}
	}
\end{overpic} 
\caption{\label{fig:discrete_kinetic_nodiss}Discrete controlled dynamics with kinetic shaping and without dissipation. The discrete controlled system stabilizes the $\phi$ motion about the equilibrium, but the $s$ dynamics is not stabilized; since there is no dissipation, the oscillations are sustained.}
\end{figure}

\begin{figure}[ht]
\hspace{-1.15em}
\begin{overpic}
[scale=.51]
{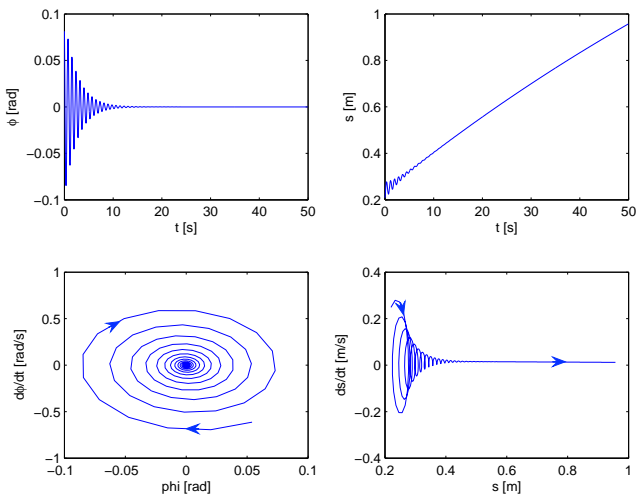}
\put(20.7,28.4)
	{\begin{rotate}{-152.3}
	{\includegraphics[width=.015\textwidth]
	{arrow}}
	\end{rotate}
	}
\put(29,11.4)
	{\begin{rotate}{-1}
	{\includegraphics[width=.015\textwidth]
	{arrow}}
	\end{rotate}
	}
\put(82.5,22.69)
	{\begin{rotate}{180}
	{\includegraphics[width=.015\textwidth]
	{arrow}}
	\end{rotate}
	}
\put(60.75,27.65)
	{\begin{rotate}{104}
	{\includegraphics[width=.015\textwidth]
	{arrow}}
	\end{rotate}
	}
\end{overpic}
\caption{\label{fig:discrete_kinetic_diss}Discrete controlled dynamics with kinetic shaping and  dissipation. The discrete controlled system asymptotically stabilizes the $\phi$ motion about the equilibrium; since there is no potential shaping, the $s$ dynamics is not stabilized, and there is a slow drift in $s$.}
\end{figure}

\begin{figure}[ht]
\hspace{-1.15em}
\begin{overpic}
[scale=.51]
{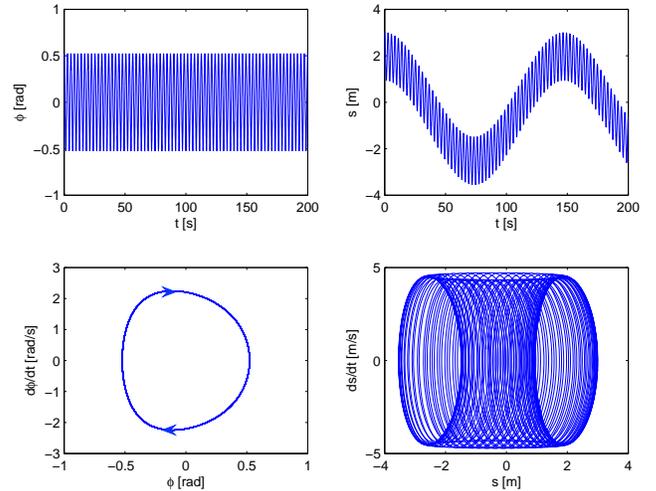}
\put(25.8,10.55)
	{\begin{rotate}{0}
	{\includegraphics[width=.015\textwidth]
	{arrow}}
	\end{rotate}
	}
\put(29,31.67)
	{\begin{rotate}{180}
	{\includegraphics[width=.015\textwidth]
	{arrow}}
	\end{rotate}
	}
\end{overpic} 
\caption{\label{fig:discrete_potential_nodiss}Discrete controlled dynamics with potential shaping and without dissipation. The discrete controlled system stabilizes the motion about the equilibrium; since there is no dissipation, the oscillations are sustained. }
\end{figure}

When dissipation is added, we obtain an asymptotically stabilizing control law, as illustrated in Figure~\ref{diss}. This is consistent with the stability analysis of Section \ref{disc_cart_pendulum.sec}.
\begin{figure}[htbp]
\hspace{-1.15em}
\begin{overpic}
[scale=.51]
{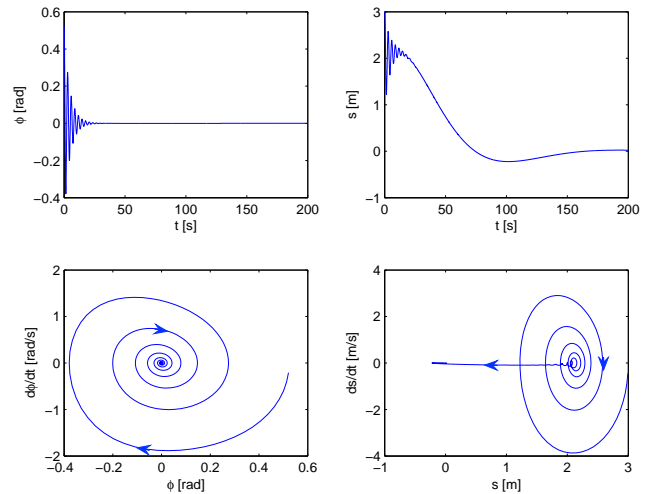}
\put(28,26.45)
	{\begin{rotate}{170}
	{\includegraphics[width=.015\textwidth]
	{arrow}}
	\end{rotate}
	}
\put(22.1,8.64)
	{\begin{rotate}{-10}
	{\includegraphics[width=.015\textwidth]
	{arrow}}
	\end{rotate}
	}
\put(70,19.85)
	{\begin{rotate}{-1}
	{\includegraphics[width=.015\textwidth]
	{arrow}}
	\end{rotate}
	}
\put(88.13,19.5)
	{\begin{rotate}{90}
	{\includegraphics[width=.015\textwidth]
	{arrow}}
	\end{rotate}
	}
\end{overpic}
\caption{Discrete controlled dynamics with potential shaping and dissipation. Here the oscillations die out and both the cart position and the pendulum angle converge to their desired values $s = 0$ and $\phi=0$.}
\label{diss}
\end{figure}

\section{Model Predictive Controller}\label{digital.sec}

We now explore the use of the forced discrete Euler--Lagrange
equations as the model in a real-time model
predictive controller, with piecewise constant control
forces. Algorithm 1 below describes the details of the procedure.
\begin{algorithm}
\caption{\textsc{Digital Controller} ( $q(\,\cdot\,), T_f, h$ )}
\begin{algorithmic}
\STATE $q_0\leftarrow$ \textbf{sense} $q(0)$
\STATE $q_1\leftarrow$ \textbf{sense} $q(h)$
\STATE $\bar q_2 \leftarrow$ \textbf{solve} $D_2 L^d(q_0, q_1) + D_1
L^d(q_1,\bar q_2) =0$ 
\STATE $\bar q_3 \leftarrow$ \textbf{solve} $D_2 L^d(q_1, \bar q_2) +
D_1 L^d(\bar q_2,\bar q_3) + F^d_1(\bar q_2, \bar q_3) =0$ 
\STATE $u_{2+1/2} \leftarrow u\left(\frac{\bar q_2 + \bar q_3}{2},\frac{\bar q_3-\bar q_2}{h}\right)$  
\STATE \textbf{actuate} $u=u_{2+1/2}$ for $t\in[2h,3h]$
\STATE $q_2 \leftarrow$ \textbf{sense} $q(2h)$
\STATE $\bar q_3 \leftarrow$ \textbf{solve} $D_2 L^d(q_1, q_2) + D_1
L^d(q_2,\bar q_3) + F_1^d(q_2, \bar q_3) =0$ 
\STATE $\bar q_4 \leftarrow$ \textbf{solve} $D_2 L^d(q_2, \bar q_3) +
D_1 L^d(\bar q_3,\bar q_4)$
\\
\hspace*{1in}$+ F_2^d(q_2, \bar q_3) + F_1^d(\bar q_3, \bar q_4) =0$
\STATE $u_{3+1/2} \leftarrow u\left(\frac{\bar q_3 + \bar q_4}{2},\frac{\bar q_4-\bar q_3}{h}\right)$
\STATE \textbf{actuate} $u=u_{3+1/2}$ for $t\in[3h,4h]$
\FOR{$k=4$ to $(T_f/h -1)$}
\STATE $q_{k-1} \leftarrow$ \textbf{sense} $q((k-1)h)$
\STATE $\bar q_k \leftarrow$ \textbf{solve} $D_2 L^d(q_{k-2}, q_{k-1})
+ D_1 L^d( q_{k-1},\bar q_k)$
\\
\hspace*{1in}$+ F_2^d(q_{k-2}, q_{k-1}) + F_1^d( q_{k-1}, \bar q_k) =0$
\STATE $\bar q_{k+1} \leftarrow$ \textbf{solve} $D_2 L^d(q_{k-1}, \bar
q_k) + D_1 L^d(\bar q_k,\bar q_{k+1})$
\\
\hspace*{1in}$+ F_2^d(q_{k-1}, \bar q_k) + F_1^d(\bar q_k, \bar q_{k+1}) =0$
\STATE $u_{k+1/2} \leftarrow u\left(\frac{\bar q_k + \bar q_{k+1}}{2},\frac{\bar q_{k+1}-\bar q_k}{h}\right)$
\STATE \textbf{actuate} $u=u_{k+1/2}$ for $t\in[kh,(k+1)h]$
\ENDFOR
\end{algorithmic}
\end{algorithm}

The digital controller uses the position information it senses for
$t=-2h, -h$ to estimate the positions at $t=0,h$ during the time
interval $t=[-h,0]$. This allows it to compute a symmetric finite
difference approximation to the continuous control force $u(\phi,s,\dot\phi,\dot s)$ at $t=h/2$ using the
approximation
\begin{align*}
u_{1/2} &= u\left(\frac{\bar\phi_0+\bar\phi_1}{2},\frac{\bar s_0+\bar s_1}{2}, \frac{\bar\phi_1-\bar\phi_0}{h},\frac{\bar s_1-\bar s_0}{h}\right),
\end{align*}
where the overbar indicates that the position variable is being
estimated by the numerical model. This control is then applied as a
constant control input for the time interval $[0,h]$. This algorithm can be implemented in real-time if the two forward solves can be computed within the time interval $h$. On a 2.5\,GHz PowerPC G5 running MATLAB, two forward solves take 631.2\,$\mu$sec, which is sufficiently fast to drive a digital controller with a frequency in excess of 1.5\,kHz. In our simulation, the digital controller had a frequency of 20\,Hz, which involves a computational load that is easily accommodated by an embedded controller. 

The initialization of the discrete controller is somewhat involved,
since the system is unforced during the time interval $[0,2h]$ while
the controller senses the initial states, and computes the appropriate
control forces. Consequently, a combination of the forced and unforced discrete Euler--Lagrange equations are used to predict the initial evolution of the system.

We present the numerical simulation results for the digital controller in both the case of kinetic shaping (Figure~\ref{digital_control_kinetic}) and potential shaping (Figure~\ref{digital_control_potential}). We see that in the case of kinetic shaping, the system is \nolinebreak
asymptotically \nolinebreak stabilized \nolinebreak in only the $\phi$ variable, and the $s$ dynamics exhibits a drift, whereas in the case of potential shaping, the system is asymptotically stabilized in both the $\phi$ and $s$ variables. Notice that the use of a piecewise constant control introduces dissipation-like effects, which are reduced as the time-step is decreased.

\begin{figure}[thbp]
\hspace{-1.15em}
\begin{overpic}
[scale=.51]
{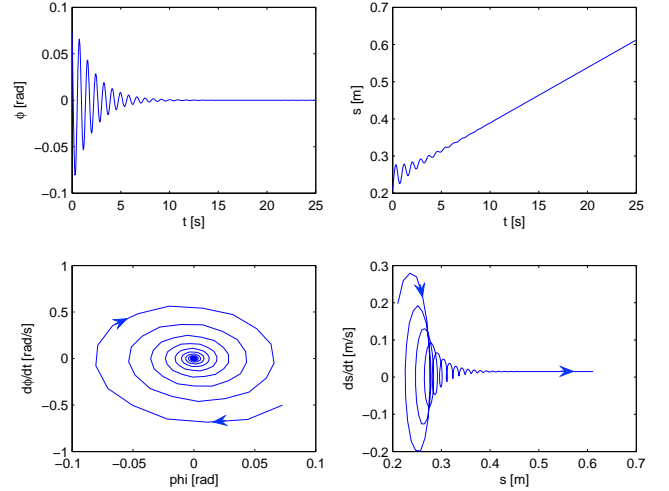} 
\put(20.7,27.9)
	{\begin{rotate}{-155.3}
	{\includegraphics[width=.015\textwidth]
	{arrow}}
	\end{rotate}
	}
\put(31.8,11.3)
	{\begin{rotate}{7}
	{\includegraphics[width=.015\textwidth]
	{arrow}}
	\end{rotate}
	}
\put(82.5,20.45)
	{\begin{rotate}{180}
	{\includegraphics[width=.015\textwidth]
	{arrow}}
	\end{rotate}
	}
\put(62.25,29.15)
	{\begin{rotate}{102}
	{\includegraphics[width=.015\textwidth]
	{arrow}}
	\end{rotate}
	}
\end{overpic}

\caption{The discrete real-time piecewise constant model predictive controller with kinetic shaping stabilizes $\phi$ to zero, but not $s$.}
\label{digital_control_kinetic}
\end{figure}

\begin{figure}[thbp]
\hspace{-1.15em}
\begin{overpic}
[scale=.51]
{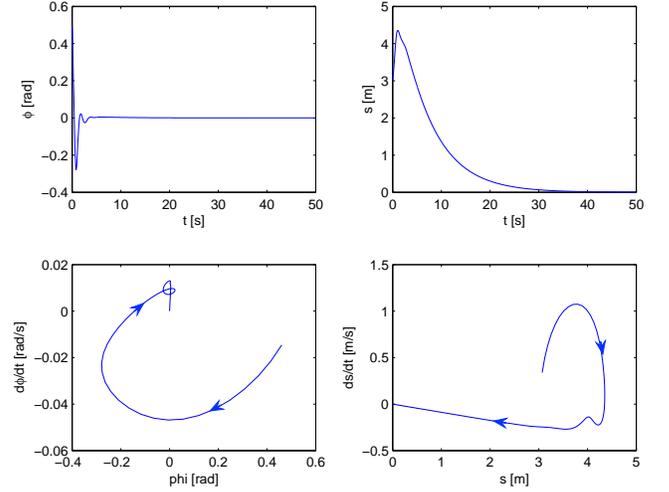} 
\put(22.7,29.9)
	{\begin{rotate}{-140.3}
	{\includegraphics[width=.015\textwidth]
	{arrow}}
	\end{rotate}
	}
\put(31.8,12.6)
	{\begin{rotate}{28.5}
	{\includegraphics[width=.015\textwidth]
	{arrow}}
	\end{rotate}
	}
\put(70,11.52)
	{\begin{rotate}{-9.3}
	{\includegraphics[width=.015\textwidth]
	{arrow}}
	\end{rotate}
	}
\put(87,21.15)
	{\begin{rotate}{100}
	{\includegraphics[width=.015\textwidth]
	{arrow}}
	\end{rotate}
	}
\end{overpic}

\caption{The discrete real-time piecewise constant model predictive controller with potential shaping stabilizes $\phi$ and $s$ to zero.}
\label{digital_control_potential}
\end{figure}

\section{Conclusions}
In this paper we have introduced potential shaping techniques  for discrete systems and have shown that these lead to an effective numerical implementation for stabilization in the case of the discrete cart-pendulum model. The method in this paper is related to other discrete methods in control that have a long history; recent papers that use discrete mechanics in the context of optimal control and celestial navigation 
are~\cite{GuBl2005}, \cite{JuMaOb2005, JuMaOb2006}, and \cite{SaShMcBl2005}. The method of discrete controlled Lagrangians for systems with higher-dimensional configuration space and with non-commutative symmetry will be developed in a forthcoming paper. 

\vspace{3.9em}

\section{Acknowledgments}
The research of AMB was supported by NSF grants DMS-0305837, DMS-0604307, and CMS-0408542. The research of ML was partially supported by NSF grant DMS-0504747 and a University of Michigan Rackham faculty grant. The research of JEM was partially supported by AFOSR Contract FA9550-05-1-0343. The research of DVZ was partially supported by NSF grants DMS-0306017 and DMS-0604108.


\begin{thebibliography}{99}
\bibitem{A} Auckly, D., L.\ Kapitanski, \& W.\ White, Control of
  Nonlinear Underactuated Systems, \emph{Commun. Pure Appl. Math.} \textbf{53}, 2000, 354--369.

\bibitem{Bl2003} Bloch, A.\,M., \emph{Nonholonomic Mechanics and
  Control}, Interdisciplinary Appl.\ Math.\ \textbf{24}, Springer-Verlag,
  2003.
  
\bibitem{BlLeMaZe2005}
Bloch,~A.\,M., M.~Leok, J.\,E.~Marsden, and D.\,V.~Zenkov [2005], Controlled Lagrangians and Stabilization of the Discrete Cart-Pendulum System, \emph{Proc.~CDC} \textbf{44}, 6579--6584.
  
\bibitem{BlLeMaZe2006}
Bloch,~A.\,M., M.~Leok, J.\,E.~Marsden, and D.\,V.~Zenkov [2006], Controlled Lagrangians and Potential Shaping for Stabilization of Discrete Mechanical Systems, \emph{Proc.~CDC} \textbf{45}, ?--?.

\bibitem{BLM1} Bloch, A.\,M., N.\ Leonard, \& J.\,E.\ Marsden,
  Stabilization of Mechanical Systems Using Controlled Lagrangians,
  \emph{Proc.\ CDC} \textbf{36}, 1997, 2356--2361.

\bibitem{BLM2} Bloch, A.\,M., N.\ Leonard, \& J.\,E.\ Marsden, Matching and
  Stabilization by the Method of Controlled Lagrangians, \emph{Proc.\ CDC} \textbf{37}, 1998, 1446--1451.

\bibitem{BLM4} Bloch, A.\,M., N.\ Leonard, \& J.\,E.\ Marsden, Potential
  Shaping and the Method of Controlled Lagrangians, \emph{Proc.\ CDC}
  \textbf{38}, 1999, 1652--1657.

\bibitem{BLM3} Bloch, A.\,M., N.\,E.\ Leonard, \& J.\,E.\ Marsden, Controlled
  Lag\-ran\-gi\-ans and the Stabilization of Mechanical Systems I: The
  First Matching Theorem, \emph{IEEE Trans.\ on Systems and Control}
  \textbf{45}, 2000, 2253--2270.

\bibitem{BCLM} Bloch, A.\,M., D-E.\ Chang, N.\,E.\ Leonard, \& J.\,E.\ Marsden,
  Controlled Lagrangians and the Stabilization of Mechanical Systems
  II: Potential Shaping, \emph{Trans.\ IEEE on Autom.\ Contr.}\ {\bf
    46}, 2001, 1556--1571.

\bibitem{ChBlLeMa2002} Chang, D-E., A.\,M.\ Bloch, N.\,E.\ Leonard,
  J.\,E.\ Marsden, \& C.\ Woolsey, The Equivalence of Controlled
  Lagrangian and Controlled Hamiltonian Systems, \emph{Control and the
    Calculus of Variations (special issue dedicated to J.\,L. Lions)}
  \textbf{8}, 2002, 393--422.

\bibitem{GuBl2005} Guibout, V. \& A.\,M.\ Bloch, A Discrete Maximum
  Principle for Solving Optimal Control Problems, \emph{Proc.\ CDC}
  \textbf{43}, 2004, 1806--1811.

\bibitem{H1} Hamberg, J., General Matching Conditions in the Theory of
  Controlled Lagrangians, \emph{Proc.\ CDC} \textbf{38}, 1999, 2519--2523.

\bibitem{H2} Hamberg, J., Controlled Lagrangians, Symmetries and
  Conditions for Strong Matching, \emph{In: Lagrangian and Hamiltonian
    Methods for Nonlinear Control,} Elsevier, 2000.

\bibitem{JaLeMaWe2005}
Jalnapurkar,~S.\,M., M.~Leok, J.\,E.~Marsden \& M.~West,
Discrete Routh Reduction, 2005, arXiv:math.NA/0508330. 

\bibitem{JuMaOb2005} Junge, O., J.\ Marsden, \& S.\ Ober-Bl\"obaum,
  Discrete Mechanics and Optimal Control, \emph{Proc.\ of the 16th IFAC World Congress}, Prague, 2005. 

\bibitem{JuMaOb2006} Junge, O., J.\ Marsden, \& S.\ Ober-Bl\"obaum, Optimal reconfiguration of formation flying spacecraft--a decentralized approach, {\it Proc. 45th IEEE Conf. on Decision and Control}, 2006, 5210-5215

\bibitem{KaMaOrWe2000} Kane, C., J.\,E.\ Marsden, M.\ Ortiz, \& M.\ West,
  Variational Integrators and the Newmark Algorithm for Conservative
  and Dissipative Mechanical Systems, \emph{Int.\ J.\ Numer.\ Math.\ Eng.}\ \textbf{49}, 2000, 1295--1325.

\bibitem{Marsden} Marsden, J.\,E., \emph{Lectures on Mechanics,} London
  Mathematical Society Lecture Note Series \textbf{174}, Cambridge
  University Press, 1992.

\bibitem{MR} Marsden, J.\,E. \& T.\,S. Ratiu, \emph{An Introduction to
    Mechanics and Symmetry.} Texts in Appl. Math. \textbf{17},
  Springer-Verlag, 1999.

\bibitem{MaWe2001} Marsden, J.\,E. \& M.\ West, Discrete Mechanics and
  Variational Integrators, \emph{Acta Numerica} \textbf{10}, 2001, 357--514.

\bibitem{MaOrSc2000} Maschke, B., R.\ Ortega, \& A.\ van der Schaft,
  Energy-Based {L}yapunov Functions for Forced {H}amiltonian Systems
  with Dissipation, \emph{IEEE Trans.\ Automat.\ Control\/}~\textbf{45},
  2001, 1498--1502.

\bibitem{OrSpGoBl2002} Ortega, R., M.W.~Spong, F. G\'{o}mez-Estern, \&
G.~Blankenstein, Stabilization of a Class of Underactuated Mechanical Systems
via Interconnection and Damping Assignment, \emph{IEEE Trans. Aut.
Control} \textbf{47}, 2002, 1218--1233.

\bibitem{SaShMcBl2005} Sanyal, A., J.\ Shen, N.\,H.\ McClamroch, \&
  A.\,M.\ Bloch, Stability and Stabilization of Relative Equilibria of
  the Dumbbell Satellite in Central Gravity, 2006, \emph{
    Journal of the American Institute of Aeronautics and
    Astronautics,} (to appear). 

\bibitem{WoReBlChLeMa2004} Woolsey, C., C. K. Reddy, A. M. Bloch, D. E. Chang, N. E. Leonard and J. E. Marsden, Controlled Lagrangian systems with gyroscopic forcing and dissipation, {\it European Journal of Control}, {\bf 10}, number 5, 2004.

\bibitem{ZBM3} Zenkov, D.\,V., A.\,M.\ Bloch, N.\,E.\ Leonard, \& J.\,E.\ Marsden, Matching and Stabilization of Low-Dimensional Nonholonomic
  Systems, \emph{Proc.\ CDC} \textbf{39}, 2000, 1289--1295.

\bibitem{ZBM2002}
Zenkov, D.\,V., A.\,M.\ Bloch, \& J.\,E.\ Marsden, Flat Nonholonomic
Matching, \emph{Proc.\ ACC}, 2002,  2812--2817.



\end{thebibliography}
\end{document}